\author{
Robert Luko\v{t}ka\footnote{Corresponding author, Faculty of Mathematics Physics and Informatics, Comenius University, Mlynská dolina F1, 842~48~Bratislava, Slovakia}\\Comenius University, Bratislava\\{\small\tt lukotka\@@dcs.fmph.uniba.sk}\\
Edita Rollov\' a\\University of West Bohemia, Plze\v n\\{\small\tt rollova\@@ntis.zcu.cz}
}
\title{Perfect matchings in highly cyclically connected regular graphs}
\theoremstyle{definition}
\newtheorem*{theorem*}{Theorem}
\newtheorem{definition}{Definition}
\newtheorem{theorem}[definition]{Theorem}
\newtheorem{lemma}[definition]{Lemma}
\newcommand{\oc}{{\text{oc}}}
\let\epsilon=\varepsilon
\let\phi = \varphi
\begin{document}

\maketitle


\abstract{
A leaf matching operation on a graph consists of removing a vertex of degree~$1$ together with its neighbour from the graph. 
Let $G$ be a $d$-regular cyclically $(d-1+2k)$-edge-connected graph of even order,  where $k \ge 0$ and $d\ge 3$. We prove that for any given set $X$ of $d-1+k$ edges, there is no $1$-factor of $G$ avoiding $X$
if and only if either 
an isolated vertex can be obtained by a series of leaf matching operations in $G-X$,
or $G-X$ has an independent set that contains more than half of the vertices of~$G$.
To demonstrate how to check the conditions of the theorem we prove several statements on $2$-factors of cubic graphs.
For $k\ge 3$, we prove that given a cyclically $(4k-5)$-edge-connected cubic graph $G$ and
three paths of length $k$ such that the distance between any two of them 
is at least $8k-16$, there is a $2$-factor of $G$ that contains one of the paths. 
We provide a similar statement for two paths when $k=3$ and $k=4$.
As a corollary we show that given a vertex $v$ in a cyclically $7$-edge-connected 
cubic graph, there is a $2$-factor such that $v$ is in a circuit of length greater than $7$.

\smallskip

\noindent \textbf{Keywords.} 
perfect matching, regular graph, cyclic connectivity, $2$-factor.
}


\section{Introduction}
A \emph{perfect matching} of a graph $G$ is a subset of edges of $G$ such that every vertex of the graph is incident with exactly one edge of the subset.
In this paper, we study perfect matchings of highly connected regular graphs.  It is well-known that if $G$ is a $d$-regular $(d-1)$-edge-connected graph of even order, then $G$ admits a perfect matching~\cite{berge}. Moreover, the following holds according to Plesn\'ik~\cite{plesnik}.

\begin{theorem}\label{plesnik}
Let $G$ be a $d$-regular $(d-1)$-edge-connected graph of even order. If $X$ is a set of edges of $G$ with $|X|\leq d-1$, then $G-X$ admits a perfect matching.
\end{theorem}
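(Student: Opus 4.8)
The plan is to invoke Tutte's $1$-factor theorem: $G-X$ has a perfect matching if and only if $o(G-X-S)\le |S|$ for every $S\subseteq V(G)$, where $o(\cdot)$ denotes the number of odd components. So I would fix an arbitrary $S$, write $H=G-X$, and let $C_1,\dots,C_k$ be the odd components of $H-S$; the goal becomes to prove $k\le |S|$. Each $C_i$ is nonempty, and it cannot equal all of $V(G)$ since $|C_i|$ is odd while $|V(G)|$ is even; thus every $\partial_G(C_i)$ is a genuine edge cut of $G$, to which the edge-connectivity hypothesis applies.

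The first key step is a lower bound on $b_i:=|\partial_G(C_i)|$, the number of edges of $G$ leaving $C_i$. Edge-connectivity gives $b_i\ge d-1$ directly, but I would sharpen this to $b_i\ge d$ by a parity argument: since $G$ is $d$-regular, $b_i=d|C_i|-2e(C_i)$, where $e(C_i)$ is the number of edges inside $C_i$. If $d$ is even then $b_i$ is even, while if $d$ is odd then $d|C_i|$ is odd (as $|C_i|$ is odd) and $b_i$ is odd; in either case $b_i$ has opposite parity to $d-1$, so $b_i\ge d-1$ forces $b_i\ge d$. Next I would sum these boundaries, noting that each edge of $G$ leaving $C_i$ either has its other end in $S$ or joins $C_i$ to a different component of $H-S$, and the edges of the latter type must lie in $X$ (they survive deletion of $S$ yet are absent from $H-S$). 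Writing $b_i=s_i+x_i$ accordingly gives $\sum_i s_i\le d|S|$ (bounding the edges incident to $S$) and $\sum_i x_i\le 2|X|$ (each edge of $X$ is counted at most twice), so that
\[
kd\le \sum_{i=1}^{k} b_i \le d|S|+2|X|\le d|S|+2(d-1) < d(|S|+2),
\]
which yields only the weaker bound $k\le |S|+1$.

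Closing this off-by-one gap is the point I expect to require the most care, and I would resolve it not by a finer edge count but by a second, global parity observation. Since $|V(G)|$ equals $|S|$ plus the sizes of all components of $H-S$, and the odd components contribute $k$ odd summands while $|V(G)|$ is even, reduction modulo $2$ gives $k\equiv |S|\pmod 2$. Combined with $k\le |S|+1$, the value $k=|S|+1$ is excluded on parity grounds, leaving $k\le |S|$ as required. This verifies Tutte's condition for every $S$ and hence produces the desired perfect matching of $G-X$.
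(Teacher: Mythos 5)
Your proof is correct. The paper does not actually reprove this statement (it cites Plesn\'ik), but your argument coincides with the machinery the paper deploys for its generalization, Theorem~\ref{thm1}: Tutte's theorem, the parity observation that a cut around an odd vertex set in a $d$-regular graph has at least $d$ edges (so the $(d-1)$-edge-connectivity bound self-improves to $d$), the count of boundary edges against $d|S|+2|X|$, and the global parity fact that $\oc(G-X-S)-|S|$ is even, which closes the off-by-one gap exactly as in the paper's opening reduction.
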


Theorem~\ref{plesnik} has been examined in various modifications and generalizations. 
Instead of assuming that all edge-cuts have size at least $d-1$ one needs to assume it only for odd edge-cuts (\cite{cruse}, \cite{lin}). As if we remove an odd edge-cut from the graph the resulting graph does not admit a perfect matching, Cruse's result holds in the converse direction.
Chartrand and Nebeský considered the existence of a $1$-factor in $(d-2)$-edge-connected graphs \cite{nebesky}, while several authors (see e.g. \cite{liu, katernis, shiu}) inspected the existence of a $k$-factor, for $1 \le k \le d-1$, where a \emph{$k$-factor} is a $k$-regular spanning subgraph. Note that a perfect matching is a $1$-factor, and thus its complement in a $d$-regular graph is $(d-1)$-factor. 

The \emph{matching preclusion number} is the minimum number of edges whose deletion 
results in a graph without a perfect matching (or an almost-perfect matching when graph has odd order, but we consider only graphs of even order in this paper). Using this notion Theorem~\ref{plesnik} reads as ``The matching preclusion number of a $d$-regular $(d-1)$-edge-connected graph of even order is $d$\,''. Cheng et~al.~\cite{cheng1} defined the \emph{conditional matching preclusion number} to be the minimum number of edges whose deletion
yields a graph without a perfect matching and an without an isolated vertex. Cheng et~al. proved in \cite{cheng2} that 
if all edge-cuts of size at most $d$ in a $d$-regular bipartite graph $G$ separate a single vertex, then the conditional matching preclusion number of $G$ is at least $d+1$. The same statement is not true if we omit the biparticity condition \cite{cheng3}. However if we add additional condition that $G$ has no independent set of size $|V(G)|/2-1$, then the conditional matching preclusion number of $G$ is at least $d+1$ \cite[Theorem 2.3]{cheng3}. Lin and Zhang \cite[Theorems~3.2 and~3.5]{lin} proved that these two conditions (cuts on the size of independent set) are not only necessary, but also sufficient.

Besides that, Cheng et al. \cite{cheng3} defined that a graph is \emph{super-$k$-edge-connected of order $l$} if every edge-cut of size at most $k$ separates one large component from the remaining components that together contain at most $l$ edges. In \cite[Theorems~3.1-3.4]{cheng3} Cheng et al. show that super-$(3k-6)$, $(3k-7)$ and $(3k-8)$-edge-connected graphs of order $2$ have the conditional matching preclusion number equal to $2k-3$ under various assumptions (i. e. on the size of the largest independent set).

In this paper, we present strengthening of Theorem~\ref{plesnik} for graphs of higher connectivity. A major obstacle in obtaining such a result is 
the fact that maximum edge-connectivity in a $d$-regular graph is $d$. In order to extend the notion of edge-connectivity in a $d$-regular graph beyond $d$, we consider a cyclic edge-connectivity, which is a parameter commonly used in the related area of matching extensions (see e. g. \cite{aldred, holton, lou}). A graph is \emph{cyclically $h$-edge-connected} 
if there is no edge-cut containing less than $h$ edges separating
two subgraphs both containing a circuit. If $G$ is $d$-regular
and $h\le d$, then cyclic $h$-edge-connectivity coincides with
$h$-edge-connectivity. Therefore, cyclic edge-connectivity is a natural extension of the notion of 
edge-connectivity. Moreover, if a $d$-regular graph is cyclically $(d+1)$-edge-connected, for $d\geq 3$, then every $d$-edge-cut must have a single vertex on at least one side. 
As the main result we prove that if we raise the cyclic edge-connectivity of $G$ by $2k$ compared to the requirements of Theorem~\ref{plesnik}, then the set $X$ can contain $k$ extra edges, provided two obvious necessary conditions are satisfied.

Let $H$ be a graph with a vertex $v$ of degree $1$. Let $w$
be the neighbour of $v$ in $H$. Delete the vertices $v$ and $w$ to obtain $H'$. The graph $H'$ has a perfect matching if and only if $H$ has a perfect matching. The operation of deleting $v$ and $w$ from $H$ is called a \emph{leaf matching operation} or an \emph{LM operation}, for short. 
The following theorem is the main result of this paper.

\begin{theorem}\label{thm1}
Let $G$ be a $d$-regular cyclically $(d-1+2k)$-edge-connected graph
of even order, for  $k\ge 0$. Let $X$ be a set of $d-1+k$ edges of $G$. 
The graph $G-X$ does not admit a perfect matching
if and only if 
\begin{enumerate}
\item[\emph{(i)}] an isolated vertex is obtained in $G-X$ by a series of leaf matching operations, or
\item[\emph{(ii)}] $G-X$ has an independent set containing more than half of the vertices of $G$. 
\end{enumerate} 
Moreover, if (ii) holds, then the vertices that do not belong to the independent set induce at most $k-1$ edges.
\end{theorem}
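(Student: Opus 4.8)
The plan is to prove the two directions separately, treating the displayed ``moreover'' clause as a short counting consequence of~(ii).

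For the easy direction, assume (i) or (ii) and show $G-X$ has no $1$-factor. If (i) holds, then since an LM operation preserves the existence of a perfect matching (the removed degree-$1$ vertex can only be matched to its unique neighbour), the graph reached at the end of the series has a perfect matching if and only if $G-X$ does; but that graph has an isolated vertex, so $G-X$ has none. If (ii) holds, let $S$ be the independent set with $|S|>|V(G)|/2$. In any matching of $G-X$ every vertex of $S$ must be matched outside $S$, so at most $|V(G)\setminus S|<|S|$ vertices of $S$ can be covered; hence some vertex of $S$ is exposed and there is no perfect matching.

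For the hard direction I would assume that $G-X$ has no perfect matching and that (i) fails, and prove (ii); the base case $k=0$ is Theorem~\ref{plesnik}, so I may assume $k\ge1$. By the Tutte--Berge formula there is a barrier $T\subseteq V(G)$ with $\oc\big((G-X)-T\big)\ge|T|+2$, the $+2$ coming from the parity of $|V(G)|$; I would take $T$ to be the set $A$ of a Gallai--Edmonds decomposition of $G-X$, so that every odd component of $(G-X)-T$ is factor-critical and the union of the even components carries a perfect matching. A factor-critical graph on at least three vertices has minimum degree at least $2$ and hence contains a circuit, so each odd component is either a single vertex or contains a circuit. For a component $C$ that contains a circuit and whose complement $G-C$ also contains a circuit, the edge cut $\partial_G(C)$ is a cyclic edge-cut, whence $|\partial_G(C)|\ge d-1+2k$; a single vertex contributes exactly $|\partial_G(C)|=d$.

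The heart of the argument is an edge count. Writing $q$ for the number of odd components that contain a circuit and $p_1$ for the number of singleton odd components, the boundary edges of the odd components go either to $T$ (at most $d|T|$ of them) or lie in $X$ (each counted at most twice), so $\sum_C|\partial_G(C)|\le d|T|+2|X|=d|T|+2(d-1+k)$. Combining this with $|\partial_G(C)|\ge d-1+2k$ for the $q$ circuit-components, $|\partial_G(C)|=d$ for the $p_1$ singletons, and $q+p_1\ge|T|+2$, a direct calculation collapses to $q(2k-1)\le 2k-2$, forcing $q=0$ for $k\ge1$; the same count then forces the even part to be empty. Thus $V(G)=T\sqcup S$, where $S$ is the set of singleton components; $S$ is independent in $G-X$ (two singletons lie in distinct components, hence are non-adjacent) and $|S|=\oc\big((G-X)-T\big)\ge|T|+2>|V(G)|/2$, which is exactly~(ii). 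I expect the main obstacle to be the degenerate configuration in which a circuit-component $C$ has acyclic complement, so that $\partial_G(C)$ is not a cyclic cut and the bound $|\partial_G(C)|\ge d-1+2k$ is unavailable: this is precisely the regime in which the answer should be~(i) rather than~(ii), and I would show that the forest hanging off $C$ in $G-X$ forces a degree-$1$ vertex and hence a series of LM operations ending in an isolated vertex.

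Finally, the ``moreover'' clause follows from the same bookkeeping applied to any independent set $S$ with $|S|>|V(G)|/2$. Since $|V(G)|$ is even, $|V(G)\setminus S|\le|S|-2$. Counting edges of $G-X$ by $|E(G-X)|=\tfrac{d}{2}|V(G)|-(d-1+k)$ and using that $S$ is independent, so that all $G-X$-edges incident to $S$ join $S$ to $V(G)\setminus S$, one gets $e_{G-X}(V(G)\setminus S)=\tfrac{d}{2}\big(|V(G)\setminus S|-|S|\big)-(d-1+k)+2x_{SS}+x_{ST}$, where $x_{SS}$ and $x_{ST}$ count the edges of $X$ inside $S$ and between $S$ and $V(G)\setminus S$; since $2x_{SS}+x_{ST}\le 2|X|$ and $|V(G)\setminus S|-|S|\le-2$, this is at most $-d-(d-1+k)+2(d-1+k)=k-1$, as claimed.
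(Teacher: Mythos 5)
Your overall strategy (a Tutte-type barrier plus a counting argument against the cyclic edge-connectivity) is the same as the paper's, and your easy direction and your proof of the ``moreover'' clause are correct. But the hard direction has a genuine gap at the step ``the same count then forces the even part to be empty.'' Your count gives $\sum_D |\partial_G(D)| \le 2|X| - d\cdot\mathrm{def} \le 2k-2$, summed over the even components $D$ of the Gallai--Edmonds part $C(G)$; since each such $D$ only satisfies $|\partial_G(D)|\ge d$ (or $(d-2)|D|+2$ if you use acyclicity), this does \emph{not} force $C(G)=\emptyset$ once $2k-2\ge d$ --- e.g.\ for $d=3$, $k=3$ the count permits a $K_2$-component in the even part, in which case the set of singleton odd components has only $|T|+2 = |V(G)|/2$ vertices and does not witness (ii). The paper sidesteps this by not using Gallai--Edmonds: it picks $S$ maximizing $\oc(G'-S)-|S|$ and, subject to that, maximizing $|S|$, and then shows that any tree component on at least two vertices contradicts one of the two maximality conditions, so \emph{every} component of $G'-S$ is a singleton. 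Your argument could likely be repaired (e.g.\ by adjoining to the singletons a maximum independent set of each even tree component, which lies in other components of $(G-X)-T$ and hence stays independent), but as written the conclusion of (ii) is not established.

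The second soft spot is the degenerate case you flag at the end, where some circuit-component $C$ has acyclic complement. Saying that ``the forest hanging off $C$ forces a degree-$1$ vertex and hence a series of LM operations ending in an isolated vertex'' is the right idea but is not yet a proof: after each LM operation you must again find a leaf of the forest that lies \emph{outside} the barrier $T$, otherwise the process stalls before producing an isolated vertex. The paper devotes a separate induction (Lemma~\ref{lemaD}) to this: it isolates a tree component $D^*$ of $G'-H$ with $\oc(D^*-S)-|D^*\cap S|\ge 1$, shows by a leaf-versus-branch-vertex count that $D^*$ has a leaf not in $S$, removes that leaf with its neighbour, and checks that the hypotheses persist. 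Some such bookkeeping is unavoidable in your version too, so this step needs to be written out rather than asserted.
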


\noindent Note that the existence of an independent set from (ii) of Theorem~\ref{thm1} would imply that $G$ is close to being a bipartite graph with uneven sizes of partite sets. 

For $k=0$, Theorem~\ref{thm1} coincides with Theorem~\ref{plesnik} as neither (i) nor (ii) is possible. Indeed, for (i), it can be proved by induction that the edge-cut whose one side has vertices participating in the LM operations contains at most $2d-2$ edges.
This is not possible as the other part has even number of vertices (for more details see the proof of Theorem~\ref{thm3} which uses similar ideas). For (ii), the moreover part of the theorem would be false if (ii) was true. For $k=1$, we are able to reformulate Theorem~\ref{thm1} as follows. This result is closely related to Theorem 3.5 in \cite{lin}.

\begin{theorem}\label{thm3}
Let $G$ be a $d$-regular cyclically $(d+1)$-edge-connected graph of even order, for $d\geq 3$.
Let $X$ be a set of $d$ edges of $G$. 
The graph $G-X$ does not admit a perfect matching
if and only if
\begin{enumerate}
\item[\emph{(a)}] \label{c22} all the edges from $X$ are incident with a common vertex, or
\item[\emph{(b)}] \label{c12} $G-X$ is bipartite and the edges from $X$ are incident with vertices from the same partite. 
\end{enumerate} 
\end{theorem}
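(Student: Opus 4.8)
The plan is to deduce Theorem~\ref{thm3} from the case $k=1$ of Theorem~\ref{thm1}, where $|X|=d$. By that theorem $G-X$ has no perfect matching if and only if (i) an isolated vertex arises from $G-X$ by leaf matching operations, or (ii) $G-X$ has an independent set on more than $|V(G)|/2$ vertices; moreover, in case (ii) the complement of that set induces at most $k-1=0$ edges of $G-X$. It therefore suffices to show that the disjunction (i)~or~(ii) is equivalent to the disjunction (a)~or~(b). I would split this into three claims: the soft implications (a)$\Rightarrow$(i) and (b)$\Leftrightarrow$(ii), and the substantial implication (i)$\Rightarrow$(a)~or~(b).

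For (b)$\Leftrightarrow$(ii) I would double count using $d$-regularity. If $G-X$ is bipartite with parts $P,Q$ and every edge of $X$ joins two vertices of $P$, then $e_G(P)=|X|=d$ and $e_G(Q)=0$, so comparing $\sum_{v\in P}\deg_G v$ with $\sum_{v\in Q}\deg_G v$ gives $|P|-|Q|=2$; hence $P$ is an independent set of $G-X$ on $|V(G)|/2+1$ vertices, which is (ii). Conversely, given an independent set $I$ of $G-X$ with $|I|>|V(G)|/2$, the ``moreover'' clause (with $k-1=0$) makes $V(G)\setminus I$ independent as well, so $G-X$ is bipartite with parts $I$ and $V(G)\setminus I$; writing $a,c$ for the numbers of deleted edges inside $I$ and inside $V(G)\setminus I$, the inequality $d(|I|-|V(G)\setminus I|)=2(a-c)\ge 2d$ together with $a\le d$ forces $a=d$, $c=0$, so all of $X$ lies inside $I$, i.e.\ (b).

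The implication (a)$\Rightarrow$(i) is immediate, since $\deg_G v=d=|X|$ makes the common vertex $v$ isolated already in $G-X$. For (i)$\Rightarrow$(a)~or~(b) I would fix leaf matching operations removing pairs $(v_1,w_1),\dots,(v_t,w_t)$ and ending at an isolated vertex $z$, and set $W=\{w_1,\dots,w_t\}$, $I^{*}=\{v_1,\dots,v_t,z\}$ and $A=W\cup I^{*}$, so $|A|=2t+1$. The structural observation driving everything is that $I^{*}$ is independent in $G-X$ with $N_{G-X}(I^{*})\subseteq W$: each leaf has degree one at its removal, so it is adjacent only to its partner in $W$ and to no vertex removed later, whence no two leaves are adjacent and no leaf is adjacent to $z$; and $z$, being isolated after all removals, has its neighbours among the removed vertices, hence (not being adjacent to any leaf) inside $W$.

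The hard part is turning this into a cut bound. Since every edge of $G-X$ leaving $I^{*}$ ends in $W\subseteq A$, I would write $\sum_{v\in I^{*}}\deg_{G-X}v=d(t+1)-(\text{deletions meeting }I^{*})$ for the number of $(G-X)$-edges between $I^{*}$ and $W$, substitute into $\partial_G(A)=d(2t+1)-2e_G(A)$, and simplify to $\partial_G(A)=-d+2\delta-2e_G(W)$, where $\delta$ is the number of edges of $X$ that meet $I^{*}$ but not $W$. As $\delta\le|X|=d$ this yields $\partial_G(A)\le d$; this step is the main obstacle, and its point is that the $d$ deleted edges are exactly enough to pay for the Hall deficiency $|I^{*}|-|W|=1$. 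Finally, since $G$ is cyclically $(d+1)$-edge-connected and $\partial_G(A)\le d$, one side of the cut induces a forest. If $G[A]$ is a forest then $e_G(A)\le 2t$, so $\partial_G(A)\ge 2t(d-2)+d$, which with $d\ge 3$ forces $t=0$; then $z$ is isolated in $G-X$ and all of $X$ meets $z$, giving (a). Otherwise $G[V(G)\setminus A]$ is a forest on the odd number $|V(G)|-2t-1$ of vertices, and the analogous estimate $(d-2)(|V(G)|-2t-2)\le 0$ forces $|V(G)\setminus A|=1$; inspecting the resulting equality case shows that $W$ is independent, that $G-X$ is bipartite with parts $I^{*}\cup(V(G)\setminus A)$ and $W$, and that all of $X$ lies inside the first part, which is exactly (b).
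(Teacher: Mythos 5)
Your proposal is correct, and while it follows the same overall reduction as the paper (deduce everything from Theorem~\ref{thm1} with $k=1$ and then convert the two conditions), your handling of the crucial implication ``LM operations produce an isolated vertex $\Rightarrow$ (a) or (b)'' is genuinely different. The paper splits on the number $s$ of LM operations, treats $s=0$ and $s=1$ by hand, and for $s\ge 2$ proves Lemma~\ref{LemCor2} by induction on the step index, establishing $|\partial_G(U_t)|\le 2d$ together with structural facts about which edges can meet the leaves; only afterwards does it attach the final isolated vertex, invoke cyclic connectivity, and re-apply Theorem~\ref{thm1} to get bipartiteness. You instead take the whole removed set together with the terminal isolated vertex at once, observe that $I^{*}=\{v_1,\dots,v_t,z\}$ is independent in $G-X$ with all its $(G-X)$-neighbours in $W$ (the same observation as the paper's Lemma~\ref{LemCor2}(iii), proved the same way), and derive the single identity $\partial_G(A)=-d+2\delta-2e_G(W)\le d$; the forest dichotomy forced by cyclic $(d+1)$-edge-connectivity then yields $t=0$ (hence (a)) on one branch and $|V(G)\setminus A|=1$ with $\delta=d$, $e_G(W)=0$ (hence (b)) on the other. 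I checked the counting: with $\alpha,\gamma$ the $X$-edges inside $I^{*}$ and from $I^{*}$ to $V(G)\setminus A$ respectively, $e_G(A)=d(t+1)-\alpha-\gamma+e_G(W)$, which gives exactly your formula, and the equality analysis in the second branch does deliver the bipartition $(I^{*}\cup\{u\},W)$ with all of $X$ inside the first part. Your route avoids the induction and the $s=1$ special case entirely and is arguably cleaner; what it does not produce is the finer structural information of Lemma~\ref{LemCor2} (e.g.\ that every edge of $X$ meets $V_2$), which the paper's argument yields as a by-product but which is not needed for Theorem~\ref{thm3} itself. Your explicit double-counting equivalence between (b) and condition (ii) (using the ``moreover'' clause with $k-1=0$) is also a welcome piece of bookkeeping that the paper leaves partly implicit.
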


\begin{figure}[h]
\begin{center}
\includegraphics[width=8cm]{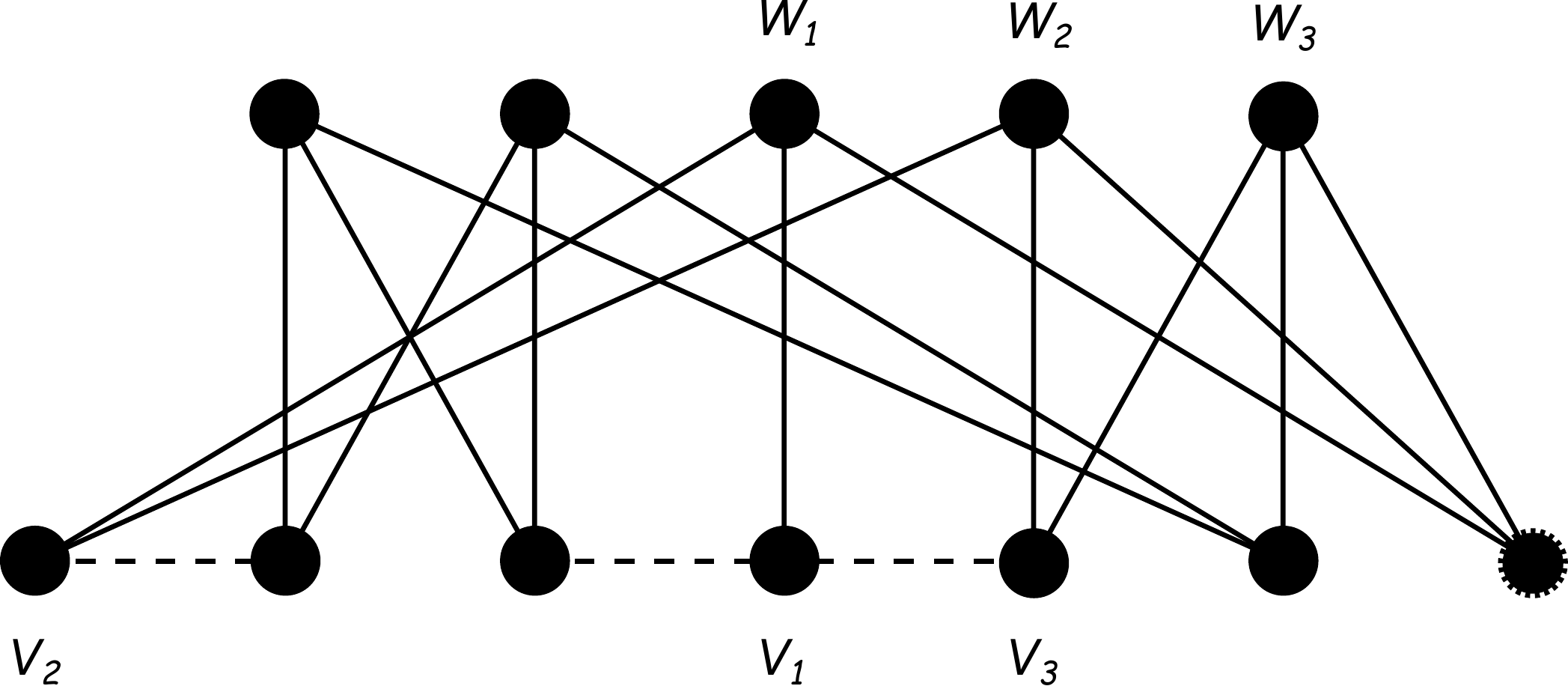}
\caption{Cyclically $4$-edge-connected cubic graph without an $1$-factor avoiding dashed edges}
\label{fig1}
\end{center}
\end{figure}
It is interesting that the two conditions of Theorem~\ref{thm1} and Theorem~\ref{thm3} are not in one-to-one correspondence for every graph.  Figure~\ref{fig1} provides an example of a graph $G$, where the set $X$ is depicted by dashed edges. Note that while the edges of $X$ are not all incident with a common vertex, an isolated vertex can be obtained in $G-X$ by a series of LM operations (the subscript of the vertices indicates the order of LM operations applied in $G-X$).

\bigskip

Let $G$ be a graph and let $X$ be a set of edges from $G$. If $G$ is sufficiently high cyclically edge-connected, then the condition (i) from Theorem~\ref{thm1} is a local condition, which can be often verified easily. Thus, in order to use Theorem~\ref{thm1}, it is crucial to have a method that allows us to verify the condition (ii) efficiently. We provide one such method while we prove several theorems
on \emph{cubic} graphs, which are $3$-regular graphs. Recall that the complement of a perfect matching in a cubic graph is a $2$-factor.
Many important theorems and conjectures can be reformulated in terms of $2$-factors of cubic graphs. For instance, the $4$-colour theorem can be restated as ``Every bridgeless planar cubic graph has a $2$-factor without circuits of odd lengths''. Considering general cubic graphs of high cyclic edge-connectivity, very interesting conjecture of Jaeger and Swart \cite{jaeger} states that every cyclically $7$-edge-connected cubic graph is $3$-edge-colourable, i.~e.~has a $2$-factor without odd circuits. A solution to this conjecture would shed more light on several important conjectures, e.~g.~to prove the $5$-flow conjecture it would be sufficient to consider cubic graphs with cyclic edge-connectivity equal to $6$ \cite{kochol}. However, the progress towards proving the conjecture of Jaeger and Swart is very limited.

Aldred et al. \cite{aldred2} examined when it is possible to extend a matching into a $2$-factor. Here, we aim to extend a path $P$ of a given cubic graph into a $2$-factor. This is possible if and only if $G-E(P)$ admits a perfect matching. Note that according to Theorem~\ref{plesnik}, it is always possible to find a perfect matching that avoids a path of length $1$ or $2$ (where the \emph{length of a path} is the number of its edges).  For longer paths, Theorem~\ref{thm1} gives a necessary and sufficient condition when the extension is possible. 
Let $G$ be a graph, and let $G_1$ and $G_2$ be two subgraphs of $G$. We define the \emph{distance} of $G_1$ and $G_2$ to be $\min\{ d(v_1,v_2) | v_1\in G_1,v_2\in G_2 \}$, where $d(v_1,v_2)$ is the length of a shortest path between $v_1$ and $v_2$ in $G$. The distance between the prescribed edges is a common prerequisite in theorems on matching extensions (see e.~g.~\cite{aldred}).
We apply Theorem~\ref{thm1} to show, that if we have three paths on $l$ edges that are sufficiently far apart, for a given integer $l$ in a sufficiently cyclically edge-connected graph, one of the paths can be extended into a $2$-factor.

\begin{theorem}\label{thm4}
Let $G$ be a cyclically $\max\{4l-5,2\}$-edge-connected cubic graph and let $P_1$, $P_2$, and $P_3$ be
three paths of $G$ on $l$ edges such that the distance of any two of them 
is at least $\max\{8l-16,0\}$. Then there is a $2$-factor of $G$ that contains 
$P_1$, $P_2$ or $P_3$.
\end{theorem}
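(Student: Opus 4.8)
The plan is to apply Theorem~\ref{thm1} separately to each of the three paths and then argue that the three resulting obstructions cannot coexist. First I would dispose of the range $l\le 2$ directly: with the cubic degree $d=3$ we have $|E(P_j)|\le 2=d-1$, so Theorem~\ref{plesnik} already gives a perfect matching of $G-P_j$ for a single $j$ (and the distance hypothesis is vacuous). For $l\ge 3$ I set $k=l-2\ge 1$ and $d=3$, so that each $X_j:=E(P_j)$ has exactly $d-1+k=l$ edges, and cyclic $(4l-5)$-edge-connectivity entails cyclic $(2l-2)=(d-1+2k)$-edge-connectivity; thus Theorem~\ref{thm1} applies to each $X_j$. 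Assuming for contradiction that no $P_j$ extends, each $X_j$ satisfies (i) or (ii) of Theorem~\ref{thm1}, and the goal becomes to contradict either the cyclic connectivity or the parity of $|V(G)|$.

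A preliminary point I would record is that cyclic $(4l-5)$-edge-connectivity forces large girth: a short circuit, together with the one edge leaving each of its vertices, bounds an edge-cut of size below $4l-5$ with a circuit on each side. Hence a ball of radius about $2l$ around each $P_j$ is a tree, and since the paths are pairwise at distance at least $8l-17$, these three tree-neighbourhoods are pairwise disjoint and non-adjacent. This locality is what lets me treat the three obstructions independently, and it is here that the constant $8l-17$ is calibrated to guarantee the needed separation.

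Next I would show that \emph{at most one} of the three paths can be of type (ii). If $P_a$ and $P_b$ both satisfy (ii), with independent sets $I_a,I_b$ of size greater than $|V(G)|/2$, then by the moreover clause the only edges inside $I_a$ lie on $P_a$ and $J_a=V(G)\setminus I_a$ spans at most $k-1=l-3$ edges of $G-X_a$, so every edge violating the bipartition $(I_a,J_a)$ is confined to the tree-neighbourhood of $P_a$, and similarly for $b$. Away from both (disjoint) neighbourhoods $G$ is properly bipartite for each partition, so on the robustly connected bulk the two $2$-colourings coincide; propagating this agreement via the high connectivity I would conclude $I_a=I_b$ (the larger side being pinned by $|I|>|V(G)|/2$). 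Then the offending edges lie simultaneously on $P_a$ and $P_b$, which are disjoint, so $G$ is genuinely bipartite with parts of unequal size — impossible for a cubic graph, since counting edges across the bipartition forces the two sides to be equal. Hence at least two paths, say $P_1,P_2$, satisfy (i).

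It remains to analyse condition (i) and to combine two such obstructions, which is the step I expect to be the main obstacle. For a type-(i) path $P_j$ I would follow the leaf-matching process in $G-X_j$, whose degree-one seeds are exactly the $l-1$ internal vertices of $P_j$, and gather the deleted vertices together with the resulting isolated vertex into an odd set $W_j$. Bounding the edges of $G$ inside $W_j$ from below by the leaf-matching pairs and the path-edges they contain, a degree count in the cubic graph gives $|\partial_G(W_j)|<4l-5$; moreover $W_j$ sits inside the tree-neighbourhood of $P_j$, so it is a forest whose small side carries no circuit. A single such cut is therefore \emph{permitted} by cyclic connectivity precisely because one side is acyclic, so one genuinely needs both $W_1$ and $W_2$ at once. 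The crux is to derive a contradiction from the two disjoint, forest-like odd cuts — either by routing a path through the circuit-rich bulk between them to merge $W_1$ and $W_2$ into one edge-cut with a circuit on each side, violating cyclic $(4l-5)$-edge-connectivity, or by a global near-perfect-matching/parity count showing that two independent leaf-matching deficiencies are incompatible with $|V(G)|$ being even. Carrying out this combination completes the contradiction and hence the proof.
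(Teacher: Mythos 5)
There is a genuine gap, and it sits at the heart of your argument: the claim that two paths of type (ii) force $I_a=I_b$ and hence a contradiction. Two large independent sets coming from two far-apart paths agree (up to a global swap) on the big bipartite ``bulk'', but they are free to differ inside the tree-neighbourhoods of the paths; their symmetric difference is a vertex set whose boundary is the edge-cut $Y(P_a)\,\Delta\, Y(P_b)$ (this is exactly Lemma~\ref{lemaZas} in the paper), and such a cut is \emph{permitted} by cyclic $(4l-5)$-edge-connectivity because one of its sides can be acyclic. The two-path example constructed in the paper's introduction is a direct counterexample to your step: there both $P_1$ and $P_2$ satisfy (ii) with \emph{distinct} independent sets, the graph can be made arbitrarily cyclically edge-connected, and the paths arbitrarily far apart. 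If your propagation argument were valid it would prove the two-path version of Theorem~\ref{thm4}, which that example refutes. So ``at most one path is of type (ii)'' is false, and your reduction to the case of two type-(i) paths never gets off the ground. (A secondary issue: that fallback case is in fact vacuous --- the girth forced by Lemma~\ref{lemaCyklicka} shows the LM process in $G-E(P)$ stalls after removing the inner vertices of $P$ and their outside neighbours, so condition (i) of Theorem~\ref{thm1} never occurs for any of the paths. You neither establish this nor carry out the sketched combination of two type-(i) cuts, which is left as ``either\dots or''.)

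The real difficulty, which your proposal never reaches, is precisely the case where all three paths are of type (ii) with three different independent sets. The paper resolves it by a quantitative, not qualitative, argument: for each $P_i$ the edge set $Y(P_i)$ (edges inside $I(P_i)$, which are path edges, together with the at most $l-3$ edges induced by the complement) is split into path edges $B_i$, edges close to $P_i$, and edges distant from $P_i$; Zaslavsky's theorem makes $Y(P_1)\,\Delta\,Y(P_2)$ a small edge-cut with exactly one cyclic side, and a tree-by-tree induction (Lemma~\ref{lemmathm3}) bounds $|B_2|$ by $2|D_1'|+|C_2|$, contradicting $|B_2|\ge l-2$. The third path enters only once but indispensably: since no edge can be close to both $P_2$ and $P_3$, at most half of the distant edges of the minimizing path are close to $P_2$, which is what makes the final count close. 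Without a mechanism of this kind --- some way to play the three obstructions off against each other rather than pairwise --- the proof cannot be completed, since every pairwise configuration is realizable.
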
 

We note that while the cyclic edge-connectivity and the path distance from Theorem~\ref{thm4} may not need to be optimal, the condition on the number of paths cannot be reduced. Let us demonstrate that Theorem~\ref{thm4} does not hold if only two paths are prescribed, even if we assume arbitrary cyclic edge-connectivity and distance between the paths. Let $H$ be a bipartite subcubic graph with partites $M$ and $N$ such that $M$ contains exactly $18$ vertices of degree $2$ and a couple of vertices of degree $3$, while 
$N$ contains only vertices of degree $3$. Partition the set of vertices of degree $2$ into two sets $A=\{a_1, \ldots, a_9\}$ and $B=\{b_1,\ldots, b_9\}$ of size $9$. Add new vertices $x_i$ and $y_i$ to $H$, for $i\in\{ 1,\ldots, 7\}$. Add new edges $a_1x_1$, $a_8x_7$ $a_9x_7$, $b_1y_1$, $b_8y_7$, $b_9y_7$, and for $i\in\{1,\ldots, 6\}$, add edges $a_{i+1}x_i$, $b_{i+1}y_i$, $x_{i+1}x_i$ and $y_{i+1}y_i$. Note that the resulting graph is cubic (see Figure~\ref{figthm4}).
\begin{figure}
\begin{center} 
\includegraphics[scale=1.2]{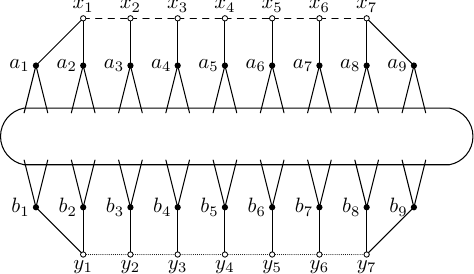} \\
\caption{Construction of a graph containing two $6$-paths that are arbitrarily far apart, but neither of them can be extended into a $2$-factor.}
\label{figthm4}
\end{center}
\end{figure}

Set $P_1=x_1x_2\cdots x_7$ and $P_2=y_1y_2\cdots y_7$.
The edge-set of $P_1$ cannot be extended into a $2$-factor, because $\{x_1, x_2, \ldots, x_7\} \cup N \cup \{y_1, y_3, y_5, y_7\}$ is an independent set of $G-E(P_1)$ containing more than half of the vertices.
Similarly, the edge-set of $P_2$ cannot be extended into a $2$-factor, because
$\{y_1, y_2, \ldots, y_7\} \cup N \cup \{x_1, x_3, x_5, x_7\}$ is an independent set of $G-E(P_2)$ containing more than half of the vertices. Note that the choice of $H$ can be done so that the resulting graph is arbitrarily cyclically edge-connected, and $P_1$ and $P_2$ are in any arbitrary distance.

On the other hand, if the paths have fewer than $6$ edges, then it is possible to prove analogous result to Theorem~\ref{thm4} even for two paths. We analyse the cases when $l=3$ and $l=4$ and provide the complete characterisation when  none of these two paths can be extended into a $2$-factor. The following theorem is a corollary of the case when $l=4$.

\begin{theorem} \label{cor7}
Let $G$ be a cyclically $7$-edge-connected cubic graph and let $v\in V(G)$. Then there exists a $2$-factor of $G$ such that $v$ does not belong to its $7$-circuit.
\end{theorem}
Theorem~\ref{cor7} can be viewed as a strongly weakened version of the conjecture of Jaeger and Swart \cite{jaeger}: we do not forbid circuits of all odd lengths, only of the length $7$ and we forbid it only for the circuit passing through a given vertex $v$. Despite the apparent simplicity of   
Theorem~\ref{cor7}, we are not aware of any significantly simpler approach to proving it than the one used in this paper. 
Moreover, we are not aware of any result that would imply a version of
Theorem~\ref{cor7} where all occurrences of the number $7$ is replaced by a larger integer.

The organisation of the paper is as follows. In Section~\ref{s2} we prove Theorem~\ref{thm1} and Theorem~\ref{thm3}. In Section~\ref{s3} we prove Theorem~\ref{thm4}. In Section~\ref{s4} we deal with extensions of two paths of the length 3 and 4 into a $2$-factor, and we prove Theorem~\ref{cor7}.


\section{Perfect matchings without preselected edges}\label{s2}

For a graph $G$ and a vertex $v\in V(G)$, let $\deg_G(v)$ denote the degree
of $v$ in $G$. For a set $W\subseteq V(G)$ let $\partial_G(W)$ denote the set of edges of $G$ with exactly one end in~$W$. For a subgraph $H$ of a graph $G$, we abbreviate $\partial_G(V(H))$ to $\partial_G(H)$. 
Let $\oc(G)$ denote the number of odd components of $G$. 
For two sets $A$ and $B$, $A\Delta B$ denotes the \emph{symmetric difference} of $A$ and $B$, i. e. $A\Delta B = (A-B) \cup (B-A)$.
Now we proceed with a proof of Theorem~\ref{thm1}.

\begin{proof}[Proof of Theorem~\ref{thm1}]
If either of the conditions (i) and (ii) is satisfied, then $G-X$ admits no perfect matching.
Therefore, we need to prove only the converse statement. 

Let $G'=G-X$ and suppose that $G'$ has no perfect matching.
Due to Tutte's perfect matching theorem \cite[Theorem 2.2.1]{diestel} there is a vertex-set $S$, such that 
$\oc(G'-S)>|S|$. Since $G'$ has even number of vertices, we conclude that $\oc(G'-S)-|S|\ge 2$.
We choose $S$ so that $\oc(G'-S)-|S|$ is maximal and subject to this condition 
$|S|$ is maximal. We distinguish three cases.
\medskip

\noindent
\underline{Case 1:} There exists a component $H$ of $G'-S$ such that $G'-H$ is acyclic.
\smallskip

\noindent
We prove that an isolated vertex can be produced  by a series of LM operations.
We prove the statement by induction. 
As the induction hypothesis is different from the statement of Theorem~\ref{thm1}
we formulate the statement in a separate lemma.

\begin{lemma}\label{lemaD}
Let $G'$ be a graph, let $S$ be a subset of $V(G')$, and let $H$ be a component of
$G'-S$. Suppose that
\begin{itemize}
\item $G'-H$ is acyclic, and
\item $\oc(G'-S)-|S| \ge 2$.
\end{itemize}   
Then an isolated vertex can be created in $G'$ by a series of LM operations.
Moreover, all such LM operations remove only vertices from $G'-H$
and the  created isolated vertex is also from $G'-H$.
\end{lemma}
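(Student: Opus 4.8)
The plan is to argue by induction on $|V(G')|$. Throughout I would write $F=G'-H$ for the acyclic graph obtained by deleting $H$, and let $C_1,\dots,C_m$ be the components of $G'-S$ other than $H$; since each $C_i$ is a subgraph of the forest $F$, every $C_i$ is a tree. The terminating situation is the one we are aiming for: if some $C_i$ is a single vertex with no neighbour in $S$, then that vertex is already isolated in $G'$ and lies in $G'-H$, so we are done with no operations performed. Hence in the inductive step I may assume no vertex of $\bigcup_i C_i$ is isolated in $G'$, and I want to perform one LM operation inside $G'-H$ that produces a strictly smaller instance still satisfying the two hypotheses.

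The first and main step is to locate a leaf of $F$ that lies in $\bigcup_i C_i$ rather than in $S$. Such a vertex $v$ automatically has $\deg_{G'}(v)=1$, because a vertex of $C_i$ has no edges to $H$, so it is eligible for an LM operation. I would find it by double counting. From $\oc(G'-S)-|S|\ge 2$, together with the fact that $H$ accounts for at most one odd component, the number $p$ of odd $C_i$ satisfies $p\ge|S|+1$, and hence $m\ge|S|+1$. Suppose, for contradiction, that every vertex of $\bigcup_i C_i$ has $F$-degree at least $2$. Writing $e_{SC}$ for the number of edges of $F$ between $S$ and $\bigcup_i C_i$, and using that the internal edges of the trees number $\sum_i(|C_i|-1)$, summing degrees over $\bigcup_i C_i$ gives $e_{SC}\ge 2m$. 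On the other hand, since $F$ is a forest its total number of edges is at most $|S|+\sum_i|C_i|-1$, which forces $e_{SC}\le|S|+m-1$. Combining yields $m\le|S|-1$, contradicting $m\ge|S|+1$. This is the step I expect to be the crux; the remainder is bookkeeping.

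Once such a leaf $v\in C_i$ is found, let $w$ be its unique $F$-neighbour and perform the LM operation deleting $v$ and $w$; both lie in $G'-H$. I would then verify that $G''=G'-v-w$, together with the same component $H$ and a suitable $S'$, again satisfies the hypotheses. There are two cases. If $w\in S$, then $v$ has no neighbour inside $C_i$, so $C_i=\{v\}$ is a single odd component; taking $S'=S\setminus\{w\}$ one has $G''-S'=(G'-S)-v$, whence $\oc(G''-S')-|S'|=\oc(G'-S)-|S|\ge 2$. If $w\in C_i$, take $S'=S$; deleting the leaf $v$ and then $w$ from the tree $C_i$ removes two vertices, and $C_i-\{v,w\}$ has at least as many odd components as $C_i$ (immediate when $C_i$ is even, and forced by $|C_i|-2$ being odd when $C_i$ is odd), so $\oc(G''-S')\ge\oc(G'-S)\ge|S'|+2$. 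In both cases $G''-H$ is a subforest of $F$, hence acyclic, and $H$ remains a component of $G''-S'$ because its edges all went to $S$ and none were created by deleting vertices.

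Finally, applying the induction hypothesis to $(G'',S',H)$ produces an isolated vertex created by LM operations touching only $G''-H\subseteq G'-H$, and prepending the operation on $v$ and $w$ gives the required sequence for $G'$, with the isolated vertex lying in $G'-H$. The base of the induction is the terminating situation described above; termination is guaranteed because $p\ge|S|+1\ge1$ ensures $\bigcup_i C_i$ is always nonempty, so as long as no isolated vertex is present the leaf-finding step succeeds and the order drops by two.
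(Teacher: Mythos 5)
Your proposal is correct and follows essentially the same route as the paper: induction on $|V(G')|$, locate a vertex of $G'-H$ outside $S$ that is a leaf of the forest $G'-H$ (hence of degree $1$ in $G'$), perform one LM operation, and check that the Tutte-type deficiency $\oc(\cdot)-|\cdot|\ge 2$ and the acyclicity of the complement of $H$ persist. The only divergence is cosmetic: you find the leaf by a global degree-sum versus forest-edge-count argument using $m\ge |S|+1$, whereas the paper first isolates a single component $D^*$ of $G'-H$ with $\oc(D^*-S)-|D^*\cap S|\ge 1$ and counts leaves of that tree against the degrees of its $S$-vertices; both are equivalent double-counting arguments, and your explicit two-case verification of the inductive hypotheses (according to whether $w\in S$) is a detail the paper leaves implicit.
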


\begin{proof}
We proceed by induction on the number of vertices in $G'$.

\noindent
We have $\oc(G'-H-S)-|S| \ge 1$. 
Let $D_1$, $D_2$, \dots, $D_c$ be the components of $G'-H$.
Then
$$
\sum_{i=1}^c \left( \oc(D_i-S)-|D_i \cap S| \right) = \oc(G'-H-S)-|S| \ge 1.
$$
Therefore, there is a component $D^*$ of $G'-H$ such that
\begin{eqnarray}
\oc(D^*-S)-|D^* \cap S| \ge 1. \label{dstar}
\end{eqnarray}
If $D^*$ is a single vertex, then by (\ref{dstar}) this vertex is not in $S$.
Then it is an isolated vertex in $G'$ from $G'-H$, proving the lemma. 
Therefore, $D^*$ has at least two vertices.

Let $d_i$ be the number of vertices from $D^* \cap S$
of degree $i$ in $D^*$. 
As $D^*$ is a tree, the number of components of $D^*-S$
is at most $1+\sum_{i=2}^d (i-1)\cdot d_i$. On the other hand 
$D^*$ has at least $2+\sum_{i=3}^d (i-2)\cdot d_i$ leaves. 
Suppose, for the contradiction, that all leaves of $D^*$ are in $S$. 
Then $D^*\cap S$ has at least
$2+\sum_{i=2}^d (i-2)\cdot d_i+ \sum_{i=2}^d d_i$ vertices. It follows that
$$
\oc(D^*-S)-|D^* \cap S| \le \left(1+\sum_{i=2}^d (i-1)\cdot d_i\right)
 - \left(2+\sum_{i=2}^d (i-1)\cdot d_i\right) = -1,
$$
which contradicts (\ref{dstar}). Therefore $D^*-S$ contains a leaf $v$ of 
$D^*$.
  
Let $w$ be the neighbour of $v$.
We set  $G''=G'-\{v, w\}$, $S'=S-\{w\}$ (note that $S$ may not contain $w$)
and $H'=H$. The prerequisites of the lemma are satisfied for $G''$, $S'$ and $H'$ taken as $G'$, $S$ and $H$, respectively. By the induction hypothesis
we can create an isolated vertex by LM operations in $G''$. 
On the other hand, we can create $G''$ from $G'$ by an LM operation because 
$1=\deg_{D^*}(v)=\deg_{G'}(v)$. 
\end{proof}
\smallskip

\noindent
\underline{Case 2:} The graph $G'-H'$ contains a circuit for every component $H'$ of $G'-S$ and there exists at least one component $H$ of $G'-S$ that contains a circuit.
\smallskip

\noindent
Each of the graphs $H$ and $G'-H$ contains a circuit, hence by the cyclic edge-connectivity assumption $|\partial_G(H)|\ge d-1+2k$.
For every other odd component $H'$ of $G'-S$ we have $|\partial_G(H')|\ge d$ since $G$ is $d$-regular (this holds even when $k=0$, since if $d$ is even, all such edge-cuts are even and if $d$ is odd, then an edge-cut separating subgraphs with odd number of vertices must be odd). 
Let ${\cal H}$ be the set consisting of odd components of $G'-S$ and $H$. It follows that
$$
\sum_{H' \in {\cal H}}  |\partial_G(H')| \ge d \cdot (\oc(G'-S)-1)+d-1+2k.
$$
This translates to
$$
\sum_{H' \in {\cal H}}  |\partial_{G'}(H')| \ge d \cdot (\oc(G'-S)-1)+d-1+2k-2(d-1+k).
$$
Since all these edges have the second end-vertex in $S$ and $|\partial_{G'}(S)| \le d |S|$, it follows that
$$
d \cdot \oc(G'-S) - 2d+1 \le d|S|.
$$
Thus $\oc(G'-S) < |S|+2$, which contradicts $\oc(G'-S)-|S|\ge 2$.
\medskip

\noindent
\underline{Case 3:} The graph $G'-H'$ contains a circuit for every component $H'$ of $G'-S$ and each component of $G'-S$ is a tree.
\smallskip

\noindent
Suppose first that one of the components of $G'-S$, say $H$, contains more than one vertex, and let $w$ be a neighbour of a leaf $u$ of $H$. Let $S'=S\cup\{w\}$. If $H$ is even, then $\oc(G'-S')\geq \oc(G'-S)+1$, since at least one new odd component is created (the one that contains $u$). If $H$ is odd, then again $\oc(G'-S')\geq \oc(G'-S)+1$, because at least two new odd components are created (one of them contains $u$, the other one exists since $|H-\{u,w\}|$ is odd). If $\oc(G'-S')\geq \oc(G'-S)+2$, then we obtain a contradiction with the maximality of $\oc(G'-S)-|S|$. Otherwise, we obtain a contradiction with the maximality of $|S|$.

Suppose now that each component of $G'-S$ is an isolated vertex. Since $|S|<\oc(G'-S)$, the vertices
from $G'-S$ form an independent set containing more than half of the vertices of $G'$, satisfying the condition (ii) of the theorem.

We are left to prove that the set $S$ induces at most $k-1$ edges.
Since $\oc(G'-S)-|S|\ge 2$, it follows that
$$
\sum_{v\in G'-S} \deg_G(v)=d\cdot\oc(G'-S)\geq 2d +d|S|.
$$
In $G'$, this is
$$
\sum_{v\in G'-S} \deg_{G'}(v) \ge 2d +d|S| - 2(d-1+k),
$$
and all these edges have the second end-vertex in $S$. 
Suppose for the contrary, that the vertices of $S$ induce at least $k$ edges.
Then $|\partial_{G'}(S)|\le d|S|-2k$. Combining the last two inequalities we conclude
$$
2d +d|S| - 2(d-1+k) \le d|S|-2k,
$$
a contradiction.
\end{proof}

We finish the section by proving Theorem~\ref{thm3}.
\begin{proof}[Proof of Theorem~\ref{thm3}]
If either of the conditions (a) and (b) is satisfied, then $G-X$ admits no perfect matching. Therefore, we need to prove only the converse statement. 

Suppose that $G-X$ has no perfect matching. According to Theorem~\ref{thm1}, either we create an isolated vertex in $G-X$ by a series of LM operations or $G-X$ is bipartite and edges from $X$ are incident with vertices from the same partite. If the latter one occurs, we are done. Therefore, we may assume that an isolated vertex is created by a series of LM operations in $G-X$. Suppose that we have obtained an isolated vertex after $s$ LM operations. Let $v_i$ be a vertex of degree 1 that we remove during an $i$-th LM operation. Let $w_i$ be a neighbour of $v_i$ that is deleted together with $v_i$ during the $i$-th LM operation. Let $V_i=\bigcup_{j=1}^i\{v_j\}$, $W_i=\bigcup_{j=1}^i\{w_j\}$ and $U_i=V_i\cup W_i$. 

If $s=0$, then $G-X$ contains an isolated vertex, and the condition (a) holds. 
Assume that $s=1$. Note that in order to create a vertex of degree 1 in $G-X$, exactly $d-1$ edges of $X$ must be incident with the vertex $v_1$. Since $s=1$, deleting $U_1$ from $G-X$ results in a graph that contains an isolated vertex $u$. Thus either 
every edge incident with $u$ has the second end-vertex in $U_1$, 
or  there is exactly one edge $e=uu'$ incident with $u$ that does not have the second end-vertex in $U_1$ and $e\in X$. 
In the former case, since $|\partial_G(U_1)| \le 2d-2$, $|\partial_G(U_1\cup\{u\})| \le d-2$, we obtain a contradiction with the cyclic edge-connectivity of $G$. In the latter case, $|\partial_G(U_1\cup\{u\})| \le (2d-2)-(d-1)+1=d$. If $|\partial_G(U_1\cup\{u\})| \le d-1$ or if $|V(G)-U_1-\{u\}|\geq 2$, then we again obtain a contradiction with the cyclic edge-connectivity of $G$. Thus $|\partial_G(U_1\cup\{u\})|=d$ and $|V(G)-U_1-\{u\}|=1$. It follows that $V(G)=\{v_1,w_1,u,u'\}$. Note that $e$ must be the only edge between $u$ and $u'$, since $u$ is an isolated vertex in $(G-X)-U_1$. Thus, $|\partial_G(\{u,u'\})| =2d-2$, and since $G$ is $d$-regular there exists exactly one edge between $v_1$ and $w_1$, and it does not belong to $X$. Since $v_1$ is a vertex of degree $1$ in $G-X$, it follows that every other edge incident with $v_1$ is from $X$ and has an end-vertex in $\{u,u'\}$. Therefore $G-X$ is a bipartite graph with partites $\{w_1\}$ and $\{v_1,u,u'\}$. Moreover, no edge from $X$ is incident with $w_1$, implying the condition (b) of the theorem. Thus we may assume that $s\geq 2$. 

Under these assumptions we prove the following lemma by induction.

\begin{lemma}\label{LemCor2}
Let $G$ be a $d$-regular cyclically $(d+1)$-edge-connected graph of even order. Let $X$ be a set of $d$ edges of $G$. Assume that a first isolated vertex in $G-X$ is created after $s$ LM operations, where $s\geq 2$. Then 
\begin{itemize}
\item[(i)] every edge of $X$ is incident with a vertex of $V_2$,
\item[(ii)] $|\partial_G(U_t)| \le 2d$ for each $2\leq t\leq s$, and $|\partial_G(U_1)| \le 2d-2$, and
\item[(iii)] the edges incident with the vertices of $V_t$ in $G$ either belong to $X$ or they connect a vertex of $V_t$ with a vertex of $W_t$, for each $1\leq t\leq s$.
\end{itemize}
\end{lemma}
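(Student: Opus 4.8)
The plan is to run a single induction on the step number $t$, proving (ii) and (iii) simultaneously, and to read off (i) from the analysis of the first two steps. The feature that drives everything is that $X$ is tiny: since $\deg_G(v_1)=d$ while $v_1$ has degree $1$ in $G-X$, exactly $d-1$ edges of $X$ are incident with $v_1$, so there is a single ``free'' edge $e^\ast=X\setminus\partial_G(\{v_1\})$. Before starting the induction I would record one fact about a single operation: because $v_t$ has degree $1$ in $(G-X)-U_{t-1}$, all but one of its $d$ edges lie in $X$ or run into $U_{t-1}$, and the surviving edge is $v_tw_t$ with $w_t\notin U_{t-1}$; comparing these two counts shows that every edge joining $v_t$ to $V(G)\setminus U_t$ must belong to $X$.

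I would prove (i) by applying this fact to $v_2$. The only $X$-edges incident with $v_2$ are $v_1v_2$ (which stays inside $U_2$) and possibly $e^\ast$, so if $e^\ast$ is not incident with $v_2$ then no $X$-edge leaves $U_2$ at $v_2$, and hence all $d$ edges of $v_2$ stay inside $U_2=\{v_1,w_1,v_2,w_2\}$, forcing $d\le 3$. For $d\ge 4$ this is already the desired contradiction. The hard part will be $d=3$, the only place where cyclic connectivity is genuinely needed: here $v_2$ is adjacent to $v_1$, $w_1$ and $w_2$, so $\{v_1,v_2,w_1\}$ induces a triangle with $|\partial_G(\{v_1,v_2,w_1\})|=3$. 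Cyclic $4$-edge-connectivity forces the other side of this cut to be a forest; a forest all of whose vertices are cubic in $G$ and which sends only $3$ edges across the cut has at most one vertex, so $|V(G)|=4$. A direct inspection of the resulting cubic graph on four vertices shows that $G-X$ has a perfect matching, which is impossible once the LM operations produce an isolated vertex.

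Statement (iii) I would obtain by a short induction on $t$ that needs no counting. It is clear for $t=1$. For the step, suppose a non-$X$ edge joined $v_t$ to some $v_j$ with $j<t$; this edge is incident with $v_j\in V_{t-1}$, lies outside $X$, and has its other end $v_t\notin W_{t-1}$, contradicting (iii) for $t-1$ (the vertices $v_i,w_i$ being pairwise distinct). Hence every non-$X$ edge of $v_t$ lands in $W_{t-1}\cup\{w_t\}=W_t$, and combined with the hypothesis for $V_{t-1}$ this gives (iii) for $V_t$.

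Finally (ii) is bookkeeping on the boundary. Tracking which edges enter and leave the cut when $v_t,w_t$ are absorbed, and writing $p$, $q$ for the numbers of edges from $v_t$, $w_t$ into $U_{t-1}$, one gets
\[
|\partial_G(U_t)|=|\partial_G(U_{t-1})|+2d-2-2p-2q .
\]
The base case $|\partial_G(U_1)|=2d-2$ is a one-line degree count. For $t\ge 3$ I would invoke (i): every edge of $v_t$ leaving $U_t$ is in $X$ by the single-step fact, but every $X$-edge is incident with $V_2\subseteq U_t$, so none leaves; thus $p=d-1$ and the identity gives $|\partial_G(U_t)|\le 2d$. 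For $t=2$ the one free edge $e^\ast$ may leave $U_2$, so $p\ge d-2$, which together with the sharper base value $|\partial_G(U_1)|=2d-2$ still yields the bound.
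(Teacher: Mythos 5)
Your overall architecture coincides with the paper's: isolate the single ``free'' edge $e^\ast=X\setminus\partial_G(\{v_1\})$, prove (i) by analysing the second LM step, and then run an induction on $t$ in which (i) forces every non-$w_t$ edge of $v_t$ back into $U_{t-1}$ (giving $p=d-1$ and hence both the boundary bound in (ii) and the adjacency statement (iii)); your single-step fact, your split between $t=2$ and $t\ge 3$, and your induction for (iii) are all the paper's argument in different clothing. The one genuine divergence is the proof of (i), and there your route is weaker. You deduce from ``all $d$ edges of $v_2$ stay inside $U_2$'' that $d\le 3$; this uses simplicity of $G$, which the hypotheses do not grant (the paper elsewhere works with multigraphs such as $K_2^3$, and a $d$-regular cyclically $(d+1)$-edge-connected graph can have parallel edges once $d\ge 5$, since then $\partial_G(\{u,v\})=2d-4\ge d+1$). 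It also forces you into a separate $d=3$ analysis ending in a four-vertex graph whose disposal you leave to ``direct inspection''; that inspection does close (the forced $X=\{v_1v_2,v_1w_2,w_1w_2\}$ in $K_4$ leaves a path, which has a perfect matching, and moreover $U_2=V(G)$ already contradicts $s\ge 2$), but as written the claim that every such $G-X$ has a perfect matching is not self-evident --- for an arbitrary triangle $X$ in $K_4$ it is false. The paper avoids all of this with one cut count: in the bad case $|\partial_G(U_1\cup\{v_2\})|\le(2d-2)-(d-1)+1=d$, and since each side of this cut has at least two vertices (the far side contains $w_2$ and the eventual isolated vertex), $d$-regularity with $d\ge 3$ forces a cycle on each side, contradicting cyclic $(d+1)$-edge-connectivity uniformly in $d$ and without assuming simplicity. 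Replace your degree count for (i) with that cut argument; the rest of your proof stands.
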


\begin{proof}[Proof of Lemma \ref{LemCor2}.]  
First we prove (i), the second part of (ii) and (iii) for $t=1$. Note that the latter one is obvious, because $w_1$ is the only neighbour of $v_1$ in $G-X$. Since $s\geq 2$, in order to obtain an isolated vertex after two LM operations, there must be exactly $d-1$ edges of $X$ incident with $v_1$. As there is an edge connecting $v_1$ and $w_1$ in $G$, $|\partial_G(U_1)|\leq 2d-2$, implying the second part of (ii). Let $e$ be the edge of $X$ that is not incident with $v_1$. Assume to the contrary with (i) that $e$ is not incident with $v_2$. Since $v_2$ is a vertex of degree $1$ after the first LM operation in $G-X$, exactly $d-1$ edges incident with $v_2$ have the other end-vertex in $U_1$. Since the remaining edge incident with $v_2$ has $w_2$ as the other end-vertex, $|\partial_G(U_1\cup \{v_2\})| \le (2d-2)-(d-1)+1=d$. This is a contradiction with the cyclic edge-connectivity of $G$, because each side of the edge-cut has at least two vertices - one side being $U_1\cup\{v_2\}$ and the other side containing $w_2$ and a resulting isolated vertex. 

We prove the first part of (ii) and (iii) by induction on $t$. For the basis of induction, let $t=2$. Recall that $v_2$ is a vertex of degree 1 in $(G-X)-U_1$ and $e$ is incident with $v_2$. Let us count $|\partial_G(U_2)|$. 
There are at most $2d-2$ edges in $\partial_G(U_1)$, at least $d-2$ out of these are incident with $v_2$. 
As $G$ has an edge between $v_2$ and $w_2$,
there is at most one edge in $\partial_G(U_2)$ incident with $v_2$ and at most $d-1$ edges in $\partial_G(U_2)$
incident with $w_2$. We conclude that $|\partial_G(U_2)|\leq (2d-2)-(d-2)+1+(d-1)=2d$, and (ii) holds. Moreover, every edge incident with a vertex of $V_2$ either belongs to $X$ or is incident with a vertex of $W_2$ as requested by (iii).

For the induction step, let us assume that the lemma holds for every $j<t$, we are going to prove it for $t$.  
Let us count $|\partial_G(U_t)|$. There are at most $2d$ edges in $\partial_G(U_{t-1})$, out of these exactly $d-1$ are incident with $v_{t}$. There are at most $d-1$ edges incident with $w_{t}$ that are not incident with $v_{t}$. Therefore $|\partial_G(U_{t})|\leq 2d-(d-1)+(d-1)$, which proves (ii). 

To prove (iii), we only need to prove the statement for the edges incident with $v_{t}$. Since $v_{t}$ is a vertex of degree 1 in $(G-X)- U_{t-1}$, and $v_{t}w_{t}\in E(G)$, every edge incident with $v_t$ has the other end-vertex in $V_{t-1}\cup W_t$. Suppose that there is an edge $v_{t}v_j$ for some $1\leq j\leq t-1$. But the vertex $v_j$ is adjacent only with the vertices of $W_j$ or incident with edges from $X$ by the induction hypothesis. This is a contradiction, and hence every edge incident with  $v_t$ has the other end-vertex in $W_t$ and (iii) holds as well.
\end{proof}

We are ready to finish the proof of Theorem~\ref{thm3}. Let $u$ be a first isolated vertex, which is created after $s$ steps of LM operations, where $s\geq 2$. By Lemma~\ref{LemCor2} (ii), $|\partial_G(U_{s})|\leq 2d$. Thus $d$ out of these $2d$ edges are incident with $u$. Since $G$ is cyclically $(d+1)$-edge-connected, the other $d$ edges must be adjacent to a single vertex, say $u_2$, which is another isolated vertex of $(G-X)- U_s$. In this case, $G-X$ has an independent set $A$ containing more than half of the vertices of $G$, namely $V_s\cup\{u,u_2\}$. Theorem~\ref{thm1} with $k=1$ implies that $G-X$ is bipartite with partites $A$, and $B=G-A$. Note that $|A|\geq |B|+2$, since $|V(G)|$ is even.

It remains to prove that the edges from $X$ have both ends in $A$. Suppose to the contrary that at most $d-1$ edges of $X$ have both ends in $A$. Then $|\partial_G(A)|\geq d(|B|+2)-2(d-1)$. On the other hand, $|\partial_G(A)|\leq d|B|$. The two inequalities imply that $0\geq 2$, which is a contradiction. Thus (b) holds for $s\geq 2$ as well. This concludes the proof of the theorem.
\end{proof}


\section{Paths contained in $2$-factors}\label{s3}

Let $G$ be a cubic graph.
Let $P=p_0p_1\cdots p_m$ be a path of $G$ of length $m$, $m\ge 3$.
Assume that $G$ is cyclically $(2m-1)$-edge-connected. This, up to several small graphs that do not have a cycle-separating edge-cut, implies that $G$ has no circuit of length less than $2m-1$.
Here, $K_2^3$ is a cubic graph on two vertices and three parallel edges.

\begin{lemma}\label{lemaCyklicka}\cite[Proposition 1]{nedela}
A cyclically $c$-edge connected cubic graph $G$ contains no circuit of length less than $c$ unless $G$ is isomorphic to $K_2^3$, $K_4$ or $K_{3,3}$.
\end{lemma}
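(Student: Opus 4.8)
The plan is to argue by taking a shortest circuit and showing that its edge-boundary is a small cut, which is forced to be non-cycle-separating; tracking what this forces on the rest of the graph then pins down the three exceptional graphs. Concretely, suppose $G$ has a circuit of length less than $c$ and let $C$ be a shortest one, of length $g<c$. Since $C$ is shortest it is chordless (a chord would yield a shorter circuit) and, for $g\ge 3$, has no parallel edges along it (a parallel edge would be a circuit of length $2$). Because $G$ is cubic, each of the $g$ vertices of $C$ carries exactly one edge off the circuit, so $|\partial_G(C)|=g<c$. As $C$ itself contains a circuit and $G$ is cyclically $c$-edge-connected, the cut $\partial_G(C)$ cannot be cycle-separating; hence $R:=V(G)\setminus V(C)$ induces a forest $F$.

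Next I would extract the structure of $F$ by a degree count. Writing $a_v$ for the number of edges joining $v\in R$ to $C$, every $v\in R$ has $\deg_F(v)=3-a_v$, and $\sum_{v\in R}a_v=g$. Summing degrees in $F$ gives $2|E(F)|=3|R|-g$, while $F$ being a forest with $p\ge 1$ components gives $|E(F)|\le |R|-p$. Combining these yields $|R|\le g-2p\le g-2$ whenever $R\neq\emptyset$. The main obstacle is now to rule out $g\ge 5$: here I would use that two edges from $C$ meeting a common vertex $v\in R$, say at $x,y\in V(C)$, together with the shorter arc of $C$ between $x$ and $y$ produce a circuit of length at most $\lfloor g/2\rfloor+2$; by minimality of $g$ this forces $\lceil g/2\rceil\le 2$, i.e.\ $g\le 4$. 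Thus for $g\ge 5$ no vertex of $R$ receives more than one edge from $C$, giving $g=\sum_v a_v\le |R|\le g-2$, a contradiction. Hence $g\le 4$.

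It then remains to treat $g\in\{2,3,4\}$ by the same counting, each time solving $2|E(F)|=3|R|-g$ against $|E(F)|\le|R|-1$. For $g=2$ this forces $|R|\le 0$, so $R=\emptyset$ and the two extra edges close up between the two vertices of the digon, giving $G\cong K_2^3$. For $g=3$ it forces $|R|\le 1$ with $|R|$ odd, so $R$ is a single vertex adjacent to all three vertices of the triangle, giving $G\cong K_4$. For $g=4$ it forces $|R|=2$ with $F$ a single edge $uv$, so each of $u,v$ sends two edges to the $4$-circuit; girth $4$ prevents either vertex from attaching to two consecutive circuit vertices, and checking the resulting attachment (opposite pairs) exhibits a proper $3{+}3$ bipartition, giving $G\cong K_{3,3}$. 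I expect the two reductions above (forcing $R$ acyclic from the cut, and excluding $g\ge 5$ via short circuits from double attachments) to be the crux; the terminal case analysis for $g\le 4$ is then routine, and one should also note that the three exceptional graphs are exactly those admitting no cycle-separating edge-cut, which is why the girth bound may fail for them.
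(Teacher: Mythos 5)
Your proof is correct. Note, though, that the paper does not prove this lemma at all: it is quoted from Nedela and \v{S}koviera \cite[Proposition~1]{nedela}, so there is no in-paper argument to compare against. Your route --- take a shortest circuit $C$ of length $g<c$, observe it is chordless so $|\partial_G(C)|=g<c$, invoke cyclic edge-connectivity to force $G-V(C)$ to be a forest, and then play the degree sum $2|E(F)|=3|R|-g$ against $|E(F)|\le|R|-p$ to get $|R|\le g-2p$ --- is the standard proof of this fact and is carried out essentially completely. The two pivotal steps (excluding $g\ge 5$ via the ``two attachment edges plus the shorter arc'' circuit, and the terminal identification of $K_2^3$, $K_4$, $K_{3,3}$ for $g\in\{2,3,4\}$) are both sound; the only unstated details are trivial ones, e.g.\ that for $g\ge 3$ the set $R$ is nonempty because each vertex of the chordless circuit must send its third edge outside $C$ (which your counting implicitly needs), and the degenerate case of loops, which cannot occur in a cubic graph of cyclic edge-connectivity at least $2$. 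No gap.
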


Our goal is to extend $P$ to a $2$-factor. In $K_2^3$, $K_4$ or $K_{3,3}$ it is possible to extend every path into a $2$-factor. Thus assume that $G$ is not isomorphic to any of these graphs.
Further, assume that $P$ cannot be extended to a $2$-factor of $G$, hence $G-E(P)$ does not admit a perfect matching. According to Theorem~\ref{thm1} applied with $k=m-2$
a series of LM operations in $G-E(P)$ produces an isolated vertex or $G-E(P)$ has an independent set containing more than half of the vertices. 

The only vertices of degree $1$ in $G-E(P)$ are $p_1, \ldots, p_{m-1}$. 
Let $p'_1, \ldots, p'_{m-1}$ be the neighbours of $p_1, \ldots, p_{m-1}$ outside $P$, respectively.
These vertices are all distinct, otherwise $G$ would have a circuit of length at most $m$,
contradicting Lemma~\ref{lemaCyklicka}. We perform LM operations removing vertices $p_i$ and $p'_i$, 
for $i\in\{1,\dots, m-1\}$. If a vertex $p''$ of degree at most $1$ is created, this would imply that
$G$ has a circuit of length at most $m+2$, which is less than $2m-1$ when $m>3$, contradicting Lemma~\ref{lemaCyklicka}.
Thus no further LM operation can be performed since there are no further vertices of degree $1$. Therefore the condition (i) of Theorem~\ref{thm1} does not occur in this case. Theorem~\ref{thm1} (ii) implies that there exists a large independent set $I(P)$ in $G-E(P)$.
If $m=3$, then instead of using Theorem~\ref{thm1}, we use Theorem~\ref{thm3}. In both cases the following holds.

\begin{lemma}\label{lemaLM}
Let $m\ge 3$, and let $G$ be a cyclically $(2m-1)$-edge-connected cubic graph. 
Let $P$ be a path of length $m$ in $G$. Assume that $G-E(P)$ has no perfect matching.
Then $G-E(P)$ has an independent set containing more than half of the vertices.
\end{lemma}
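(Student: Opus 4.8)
The plan is to invoke the main theorem with the edge set of the path as the forbidden set, and then to eliminate the leaf-matching alternative, so that the independent-set alternative remains as the only possibility. Concretely, I would set $d=3$ and $k=m-2$ and apply Theorem~\ref{thm1} with $X=E(P)$: here $|X|=m=d-1+k$, and $G$, being cyclically $(2m-1)$-edge-connected, is in particular cyclically $(2m-2)=(d-1+2k)$-edge-connected, so all hypotheses are met. Since $G-E(P)$ has no perfect matching, the theorem gives that either (i) a series of LM operations creates an isolated vertex, or (ii) $G-E(P)$ has an independent set on more than half of the vertices. It therefore suffices to rule out (i).

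To rule out (i) when $m\ge 4$, I would first observe that the only degree-$1$ vertices of $G-E(P)$ are the interior path vertices $p_1,\dots,p_{m-1}$, whose off-path neighbours $p'_1,\dots,p'_{m-1}$ are pairwise distinct and lie outside $V(P)$; otherwise a chord or a coincidence $p'_i=p'_j$ would produce a circuit of length at most $m$, contradicting Lemma~\ref{lemaCyklicka}. The only available LM operations then remove exactly the $2(m-1)$ vertices $p_i,p'_i$. The crux is to show that \emph{no} sequence of LM operations can ever create a further vertex of degree at most $1$: such a vertex $w$ would have lost one or two of its neighbours among the $p'_i$, say $p'_a$ (and possibly $p'_b$), and then $w$ together with the edges $w\,p'_a$, $p'_a\,p_a$, the subpath of $P$ between $p_a$ and $p_b$, and $p_b\,p'_b$, $p'_b\,w$ closes a circuit of length at most $m+2$. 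Since $m+2<2m-1$ for $m\ge 4$, Lemma~\ref{lemaCyklicka} forbids this, so every LM sequence halts after the forced removals without isolating a vertex. Hence (i) fails and (ii) holds.

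The value $m=3$ is genuinely exceptional, since the circuit bound becomes $m+2=5=2m-1$, which the connectivity permits; so (i) cannot be excluded by this argument, and I would instead appeal to Theorem~\ref{thm3} (the case $k=1$). As $G-E(P)$ has no perfect matching, condition (a) or (b) holds; but the three edges of a path of length $3$ are not all incident with a single vertex, so (a) fails. Thus (b) holds and $G-E(P)$ is bipartite with all four path vertices lying in one class $A$. Because $A$ is independent and $G$ is cubic, every edge of $G-E(P)$ at a vertex of $A$ goes to the other class $B$; counting these edges from both sides and attributing the total degree deficiency of $6$ (contributed by the two degree-$2$ endpoints and the two degree-$1$ interior vertices, all in $A$) entirely to $A$ gives $3|A|-6=3|B|$, i.e.\ $|A|=|B|+2$. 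Hence $A$ is an independent set on more than half of $V(G)$, as desired. I expect the main obstacle to be the second paragraph, namely arguing the uniform short-circuit bound $m+2$ against \emph{any} LM sequence rather than a single obvious one; this uniformity is exactly what degenerates at $m=3$, which is why the boundary case must be rerouted through Theorem~\ref{thm3}, and where the strict ``more than half'' needs the degree-deficiency bookkeeping rather than bipartiteness alone.
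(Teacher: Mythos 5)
Your proposal is correct and follows essentially the same route as the paper: apply Theorem~\ref{thm1} with $d=3$, $k=m-2$, rule out the LM alternative for $m\ge 4$ via the length-at-most-$(m+2)$ circuit forced by any further degree drop (contradicting Lemma~\ref{lemaCyklicka}), and reroute $m=3$ through Theorem~\ref{thm3}; your degree count showing $|A|=|B|+2$ merely makes explicit what the paper leaves implicit. The only point you omit is that Lemma~\ref{lemaCyklicka} exempts $K_2^3$, $K_4$ and $K_{3,3}$, so these must be dispatched first --- as the paper notes, every path in them extends to a $2$-factor, so the lemma's hypothesis is vacuous there.
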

\noindent In what follows we describe how to recognise this situation using signed graphs.
\smallskip

A \emph{signed graph} $(G,\Sigma)$ is a graph $G$ whose edges are equipped with a positive or a negative sign. The set $\Sigma$ is the set of \emph{negative edges}, while $E(G)-\Sigma$ is the set of \emph{positive} ones. The \emph{switching} at a vertex $v$ of $G$ is inverting the sign of every edge incident with $v$. Two signed graphs $(G,\Sigma_1)$ and $(G,\Sigma_2)$ are \emph{equivalent} if $(G,\Sigma_1)$ can be obtained from $(G,\Sigma_2)$ by a series of switchings. According to Zaslavsky~\cite{zaslavsky}, two signed graphs $(G,\Sigma_1)$ and $(G,\Sigma_2)$ are equivalent if and only if the symmetric difference $\Sigma_1\Delta \Sigma_2$ is an edge-cut of $G$.
\smallskip

Let $I(P)$ be an independent set of vertices in $G-E(P)$ of size greater than $|V(G)|/2$ and among all such sets choose one that has biggest intersection with $P$. Assume that there is $i$, $0<i<m$, such that $p_i$ is not in $I(P)$.
Let $q_i$ be the neighbour of $p_i$ that is not in $\{p_{i-1},p_{i+1}\}$. If $q_i \not \in I(P)$, then we can add $p_i$ to $I(P)$, contradicting the choice of $I(P)$. If $q_i \in I(P)$, then we add $p_i$ to $I(P)$ and remove $q_i$ from $I(P)$. Note that due to cyclic edge-connectivity,
$q_i\neq p_j$ for any $j\in \{0,\ldots,m\}$, which contradicts the choice of $I(P)$. Thus, $p_i$ is in $I(P)$, for each $i\in\{1,\ldots,m-1\}$. 

The set of edges $Y(P)$ is defined as a union of all edges induced by $V(G)-I(P)$ and all edges of $P$ between two vertices of $I(P)$ (thus all inner edges of $P$ are in $Y(P)$). Note that $G-Y(P)$ is bipartite with partites $I(P)$ and $V(G)-I(P)$. The edges of $Y(P) \cap E(P)$ have both end-vertices in $I(P)$, while the edges of $Y(P)-E(P)$ have both end-vertices in $V(G)-I(P)$. Let $(G, Y(P))$ be a signed graph. Switch at the vertices of $I(P)$ and note that the resulting graph has only negative edges. Thus $(G, Y(P))$ is equivalent with $(G, E(G))$. In summary, if the path $P$ cannot be extended to a 2-factor of $G$, then $(G, Y(P))$ is equivalent to an all-negative signed graph.

Let $P$ and $Q$ be two paths of $G$ such that none of them can be extended into a $2$-factor of $G$.
Then $(G, Y(P))$ and $(G, Y(Q))$ are both equivalent to $(G, E(G))$ and thus they are equivalent. 
Therefore, according to Zaslavsky~\cite{zaslavsky}, there is a partition of $V(G)$ into $V_1$ and $V_2$ such that
$E(V_1, V_2)=Y(P) \Delta Y(Q)$, where $E(V_1,V_2)=\{v_1v_2\in E(G)|v_1\in V_1, v_2\in V_2\}$. 

The following lemma thus holds not only if $m \ge 4$ but even for $m=3$ as we can use Theorem~\ref{thm3} instead of Theorem~\ref{thm1}.
\begin{lemma}\label{lemaZas}
Let $m \ge 3$ and let $G$ be a cyclically $(2m-1)$-edge-connected cubic graph.
Let $P$ and $Q$ be two paths of length $m$ in $G$ such that none of them can be extended into a $2$-factor of $G$. Then there is a partition of $V(G)$ into $V_1$ and $V_2$ such that $E(V_1, V_2)=Y(P) \Delta Y(Q)$. 
\end{lemma}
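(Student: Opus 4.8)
The plan is to obtain the desired vertex partition directly from Zaslavsky's characterization of signed-graph equivalence, using the all-negative reduction already established for a single non-extendable path. The substantive content — that a path which cannot be extended to a $2$-factor yields an independent set $I(P)$ of size greater than $|V(G)|/2$, and hence that the signed graph $(G, Y(P))$ is equivalent to the all-negative signed graph $(G, E(G))$ — is exactly the summary obtained immediately before the statement, via Lemma~\ref{lemaLM} together with switching at every vertex of $I(P)$. So the remaining task is purely a transitivity-plus-translation argument.

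First I would apply that summary to each of the two paths: since neither $P$ nor $Q$ can be extended into a $2$-factor, both $(G, Y(P))$ and $(G, Y(Q))$ are equivalent to the all-negative signed graph $(G, E(G))$. Next I would invoke that equivalence of signed graphs is a genuine equivalence relation — reflexive and symmetric trivially, and transitive because the composition of two switching sequences is again a switching sequence — to conclude that $(G, Y(P))$ and $(G, Y(Q))$ are equivalent to each other.

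Finally I would translate this equivalence back into a statement about edge sets. By Zaslavsky's theorem \cite{zaslavsky}, two signings $\Sigma_1$ and $\Sigma_2$ of the same underlying graph are equivalent if and only if $\Sigma_1 \Delta \Sigma_2$ is an edge-cut of $G$; taking $\Sigma_1 = Y(P)$ and $\Sigma_2 = Y(Q)$ gives that $Y(P) \Delta Y(Q)$ is an edge-cut of $G$. Since an edge-cut is by definition the set of edges crossing some vertex bipartition, there exist $V_1$ and $V_2$ partitioning $V(G)$ with $E(V_1, V_2) = Y(P) \Delta Y(Q)$, which is the required conclusion.

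I do not expect a genuine obstacle in the lemma itself: every hard step has already been discharged in the material preceding the statement, and the lemma is essentially the formal packaging of the paragraph directly before it. The only points requiring a word of care are the transitivity of equivalence (needed to chain the two signings through the common all-negative representative) and the purely definitional identification of an edge-cut with the edges crossing a vertex partition; both are routine, so the real weight of the argument lies entirely in the earlier construction of $I(P)$ and $Y(P)$.
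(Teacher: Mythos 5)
Your proposal is correct and follows exactly the paper's own argument: both $(G,Y(P))$ and $(G,Y(Q))$ are equivalent to the all-negative signed graph $(G,E(G))$ by the preceding construction, hence equivalent to each other, and Zaslavsky's characterization then identifies $Y(P)\Delta Y(Q)$ as an edge-cut $E(V_1,V_2)$. There is nothing to add; the paper's proof is precisely this transitivity-plus-translation step.
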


Assuming even higher cyclic edge-connectivity of $G$, only one of the components of $G-E(V_1, V_2)$ contains a cycle. This observation is used in proofs of the remaining theorems. 
We proceed by proving Theorem~\ref{thm4}.

\begin{proof}[Proof of Theorem~\ref{thm4}]
By Theorem~\ref{plesnik}, the result follows for $l\leq 2$. Moreover, if $G\in\{K_2^3,K_4,K_{3,3}\}$, then the theorem holds.
Thus for the rest of the proof, we assume that $l\geq 3$ and that $G\notin\{K_2^3,K_4,K_{3,3}\}$. This allows us to use Lemma~\ref{lemaCyklicka} freely, and apply Lemmas~\ref{lemaLM} and \ref{lemaZas}, because $4l-5 \geq 2l-1$.
 
Assume for the contrary that none of $P_1$, $P_2$, and $P_3$ can be extended into a $2$-factor of $G$. 
Then by Lemma~\ref{lemaLM}, $G-E(P_i)$ has an independent set containing more than half of the vertices, for $i\in\{1, 2, 3\}$.

Let $e\in E(G)$ and let $P$ be a path of $G$. We say that $e$ is \emph{close to $P$} (\emph{distant from $P$}, respectively) if the distance of $e$ from $P$ is at most $4l-9$ (at least $4l-8$, respectively).
Since the distance between any two paths from the theorem statement is at least $8l-16$, we have the following claim.

\medskip

\noindent
\textbf{Claim $1$.} No edge of $G$ is close to both $P_i$ and $P_j$ where $i,j \in \{1,2,3\}$ and $i \neq j$. 

\medskip   

For each path $P_i$, $i \in \{1, 2, 3\}$, we partition the edges of $Y(P_i)$ into three subsets: 
\begin{itemize}
\item $B_i$ are edges of $P_i$, 
\item $C_i$ are edges close to $P_i$ that do not belong to $B_i$, and 
\item $D_i$ are edges distant from $P_i$. 
\end{itemize}

\medskip

Due to the definition of $Y(P_i)$, $B_i$, $C_i$ and $D_i$, the following holds.

\medskip

\noindent
\textbf{Claim 2.} No edge from $B_i$ is adjacent to an edge from $C_i\cup D_i$,  for each $i\in\{1,2,3\}$.
\medskip

The definition of $I(P_i)$ implies the following.
\medskip

\noindent
\textbf{Claim 3.} $|B_i|\in\{l-2,l-1,l\}$, for $i\in\{1,2,3\}$ .
\medskip

According to the last part of Theorem~\ref{thm1} where $d=3$ and $k=l-2$ 
(or according to Theorem~\ref{thm3}
for $l=3$), there are at most $k-1=l-3$ edges of $Y(P_i)$ that are not path edges.
Therefore the following holds.

\medskip

\noindent
\textbf{Claim 4.} $|C_i\cup D_i|\leq l-3$.
\medskip

Without loss of generality, suppose that $P_1$ is a path out of $\{P_1,P_2,P_3\}$ with the minimum number of distant edges. By Claim 1, no edge is close to both $P_2$ and $P_3$. Thus, without loss of generality, we may assume that at most $|D_1|/2$ edges of $D_1$ are close to $P_2$, and denote these edges by $D_1'$. According to Claim~4 and the choice of $P_1$, $|C_2|=|C_2\cup D_2|-|D_2|\leq (l-3)-|D_1|$. Since $2|D'_1|\leq |D_1|$ by the choice of $P_2$, we obtain

\begin{eqnarray}
2\cdot |D'_1| + |C_2| \le l-3. \label{eqd1c2}
\end{eqnarray}

\noindent We are going to show that $|B_2| \leq 2\cdot |D'_1| + |C_2|$, which will be a contradiction with~(\ref{eqd1c2}) by Claim~3. 
\medskip

Since none of $P_1$ and $P_2$ can be extended into a $2$-factor of $G$, signed graphs $(G, Y(P_1))$ and $(G, Y(P_2))$ are both equivalent to the signed graph $(G, E(G))$, and thus they are equivalent to each other. Therefore, according to Lemma~\ref{lemaZas} there is a partition of $V(G)$ into $(V_1, V_2)$ such that $E(V_1, V_2)=Y(P_1) \Delta Y(P_2)$. By Claim 3 and Claim 4, $|E(V_1, V_2)|=|Y(P_1) \Delta Y(P_2)|\leq |Y(P_1)|+|Y(P_2)|\leq |B_1|+|C_1\cup D_1|+|B_2|+|C_2\cup D_2|\leq l+(l-3)+l+(l-3)=4l-6$. 
\medskip

\noindent
\textbf{Claim 5.} $|E(V_1, V_2)|\leq 4l-6$.
\medskip

Due to the small size of $E(V_1, V_2)$ and the condition on the cyclic edge-connectivity of $G$, the components of $G-E(V_1, V_2)$ are mostly acyclic.
\medskip

\noindent
\textbf{Claim 6.} There exists exactly one component of $G-E(V_1, V_2)$ that is cyclic.
 
\begin{proof}[Proof of Claim 6.]
By Claim 5, $|E(V_1, V_2)|\leq 4l-6$. Since $G$ is cyclically $(4l-5)$-edge-connected, at most one of the components of $G-E(V_1, V_2)$, may contain a cycle. Let us show that $G-E(V_1, V_2)$ contains a cyclic component. Assume for the contradiction that $G-E(V_1, V_2)$ is acyclic. But then each part of the cut contains at most $4l-8$ vertices and thus $G$ at most $8l-16$ vertices. This contradicts with the fact that  the distance between $P_1$ and $P_2$ is at least $8l-16$.
\end{proof}

We denote the cyclic component of $G-E(V_1, V_2)$ by $H$. Let $W\subseteq V(G)-V(H)$. Let $E^+(W)$ denote the union of $\partial_G(W)$ with the set of edges induced by $W$ in $G$. If $S$ is an induced subgraph of $G$, then we abbreviate $E^+(V(S))$ to $E^+(S)$.
\medskip

\noindent
\textbf{Claim 7.} Let $W\subseteq V(G)-V(H)$. If $ \partial_G(W)\subseteq E(V_1,V_2)$, then $E^+(W)$ induces an acyclic graph.  

\begin{proof}[Proof of Claim 7.]
By Claim 5, $|\partial_G(H)| \leq 4l-6$. Since $G$ is cyclically $(4l-5)$-edge-connected, any subgraph of $G-V(H)$ is acyclic. Thus the graph induced by $W$ is also acyclic. Suppose that $E^+(W)$ induces a cycle $C$. Then for each edge $e\in\partial_G(W) \cap C$ there is another edge $f\in\partial_G(W) \cap C$ such that $e$ and $f$ are incident with a common vertex $w$ from $V(G)-W$. Note that no cycle of $H$ contains $w$, since either $w\notin H$ or $w$ is a vertex of degree $1$ in $H$. Thus $|\partial_G(W\cup V(C))|\leq |\partial_G(W)|\leq|E(V_1,V_2)|\leq 4l-6$, and $\partial_G(W\cup V(C))$ separates $C$ from the cycle of $H$, which is a contradiction with the cyclic edge-connectivity of~$G$.
\end{proof}

As $H$ is a component of  $G-E(V_1,V_2)$, $H$ is an induced subgraph, which together with Claim 7 implies the following.
\medskip

\noindent
\textbf{Claim 8.} Each edge from $E(G)$ is either in $E(H)$ or in exactly one set $E^+(S)$, where $S$ is an acyclic component of $G-V(H)$. 
\medskip

To bound $|B_2|$ from above, we are going to distinguish whether an edge of $B_2$ belongs to $E(H)$ or to $E^+(S)$ for some acyclic component $S$ of $G-V(H)$. First consider the edges of $H$. For each $e\in B_2\cap E(H)$, $e\notin E(V_1,V_2)$, and thus $e\notin Y(P_1) \Delta Y(P_2)$. Therefore, as $e\in Y(P_2)$,
we have $e\in Y(P_1)$. According to Claim 1 and the definition of $D'_1$, $e\in D'_1$, and
\begin{eqnarray}
|B_2 \cap E(H)| \leq |D'_1 \cap E(H)|. \label{eqinH}
\end{eqnarray}

Assume now that $S$ is an (acyclic) component of $G-V(H)$. Due to the definition of $H$, all vertices of $H$ belong either to $V_1$ or to $V_2$.
Note that the edges of $\partial_G(S)$ have one end in $S$ and one end in $H$.  Hence $\partial_G(S)\subseteq E(V_1,V_2)=Y(P_1) \Delta Y(P_2)$
and all vertices of $S$ incident with an edge from $\partial_G(S)$ belong either to $V_1$ or to $V_2$.
The following lemma bounds the number of edges in $|B_2 \cap E^+(S)|$.

\begin{lemma}\label{lemmathm3}
Let $T$ be an induced subtree of $G-V(H)$ such that $ \partial_G(T)\subseteq Y(P_1) \Delta Y(P_2)$. Assume further that all vertices of $T$ incident with edges from $\partial_G(T)$ belong either to $V_1$ or to $V_2$.
Then $|B_2 \cap E^+(T)| \leq 2 |D'_1 \cap E^+(T)| + |C_2 \cap E^+(T)|$. 
\end{lemma}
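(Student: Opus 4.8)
The plan is to classify the edges of $B_2 \cap E^+(T)$ by whether they lie in the cut $E(V_1,V_2)=Y(P_1)\Delta Y(P_2)$, and then to charge each class into $D_1'\cap E^+(T)$ and $C_2\cap E^+(T)$. First I would dispose of the non-cut edges. Suppose $e\in B_2\cap E^+(T)$ with $e\notin E(V_1,V_2)$. Since $e\in B_2\subseteq Y(P_2)$ and $e\notin Y(P_1)\Delta Y(P_2)$, we get $e\in Y(P_1)$. As $e$ is a path edge of $P_2$ it is not a path edge of $P_1$, so $e\in Y(P_1)\setminus E(P_1)\subseteq C_1\cup D_1$; being on $P_2$ it is close to $P_2$, hence by Claim~1 it is distant from $P_1$, so $e\in D_1$ and therefore $e\in D_1'$. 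Thus every non-cut edge of $B_2\cap E^+(T)$ lies in $D_1'\cap E^+(T)$, and it remains to bound the number $c$ of cut edges of $B_2\cap E^+(T)$ by $|D_1'\cap E^+(T)|+|C_2\cap E^+(T)|$; combined with the previous inclusion this yields the claimed inequality.

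For the cut edges I would exploit the two colourings of $V(T)$ given by membership in $I(P_1)$ and in $I(P_2)$. Because $I(P_2)$ is independent in $G-E(P_2)$, the edges of $B_2$ in $E^+(T)$ are exactly the edges joining two vertices of $I(P_2)$, so they decompose into subpaths of $P_2$. Along such a subpath a vertex lies in $I(P_1)$ (call it \emph{blue}) or not (\emph{red}); a cut edge of $B_2$ is precisely a blue--red transition, a red--red edge is non-cut (hence a $D_1'$ edge by the previous paragraph), and a blue--blue edge is impossible, since it would be an edge inside $I(P_1)$ off $P_1$, contradicting the independence of $I(P_1)$ together with Claim~1. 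Consequently the blue vertices are isolated along these subpaths, and $c\le 2\cdot(\text{number of blue vertices of }P_2\text{ inside }T)$.

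The crux is to convert this into a bound by $C_2$ and $D_1'$ edges, and here I would analyse a single blue vertex $p_j\in I(P_1)\cap I(P_2)$ met inside $T$. Its off-path neighbour $z$ satisfies $z\notin I(P_1)$ (else $p_jz$ is an edge inside $I(P_1)$ off $P_1$) and $z\notin I(P_2)$ (else $p_jz$ is an edge inside $I(P_2)$ off $P_2$); moreover $p_jz$ is neither in $Y(P_1)$ nor in $Y(P_2)$, so it is not a cut edge, which forces $z\in V(T)$, for otherwise $p_jz\in\partial_G(T)\subseteq E(V_1,V_2)$. Any edge from $z$ to a further neighbour outside $I(P_2)$ then lies in $Y(P_2)\setminus E(P_2)$ and, being at distance at most two from $P_2$ (here $l\ge 3$ gives $2\le 4l-9$), belongs to $C_2\cap E^+(T)$. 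The plan is to charge the two cut edges flanking each blue vertex to such a $C_2$ edge at $z$ when one exists, and otherwise to the red--red ($D_1'$) edges of the surrounding run; the factor $2$ on $D_1'$ in the statement is exactly what absorbs the blue vertices whose neighbour $z$ has both remaining neighbours inside $I(P_2)$ and so yields no $C_2$ edge.

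The main obstacle is to make this charging genuinely injective and local enough to sum over the whole tree: one must treat the ends of each $P_2$-subpath (where it leaves the $I(P_2)$-region or meets $\partial_G(T)$), the path endpoints $p_0,p_l$, and the boundary edges of $T$ uniformly, and verify that no cut edge is charged twice. I expect the hypothesis that all boundary vertices of $T$ lie in a single part of $(V_1,V_2)$ to be precisely what fixes the colour of each subpath end and rules out an unpaired blue--red transition, so that the accounting closes with the constants $2$ and $1$ exactly as stated.
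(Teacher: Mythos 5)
Your opening reduction is sound and matches the paper in spirit: every edge of $B_2\cap E^+(T)$ outside the cut $Y(P_1)\Delta Y(P_2)$ lies in $Y(P_1)$, is close to $P_2$, hence distant from $P_1$ by Claim~1, and so belongs to $D_1'$; likewise your characterisation of the cut edges of $B_2$ as those with exactly one end in $I(P_1)$, and the properties of the off-path neighbour $z$ of such a ``blue'' vertex, are all correct. But the heart of the proof --- bounding the cut edges of $B_2\cap E^+(T)$ --- is exactly the step you leave open, and the obstacle you flag is genuine, not a bookkeeping formality. Concretely: if the off-path neighbour $z$ of a blue vertex $p_j$ has both of its remaining neighbours in $I(P_1)\cap I(P_2)$, then the edges at $z$ lie in neither $Y(P_1)$ nor $Y(P_2)$ and contribute nothing to $D_1'$ or $C_2$; if moreover the surrounding run of $P_2$ alternates blue/red, every edge of the run is a cut edge and there are no red--red edges to fall back on. Your scheme then has nothing to charge to, and nothing in your argument rules such a configuration out. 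A second unresolved problem is that a cut edge of $B_2$ in $\partial_G(T)$ may have its blue end outside $T$, so the edges at its $z$ need not lie in $E^+(T)$ at all.

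The paper closes these gaps by a mechanism absent from your proposal: a complete induction on $|E(T)\cap(Y(P_1)\Delta Y(P_2))|$. The single-sidedness hypothesis on $\partial_G(T)$ is used not to fix the colours of subpath ends but to define $T_0$, the union of components of $T-(Y(P_1)\Delta Y(P_2))$ meeting the boundary; $T_0$ lies entirely in one part of $(V_1,V_2)$, so its $B_2$ edges are non-cut and hence in $D_1'$, and the components of $T-V(T_0)$ again satisfy the hypotheses with a strictly smaller cut, allowing recursion. The boundary itself is handled by a separate counting step, $|B_2\cap\partial_G(T)|\le|D_1'\cap\partial_G(T)|+|C_2\cap\partial_G(T)|$, which rests on showing via the girth bound that $E(P_2)\cap E^+(T)$ is connected (so $|B_2\cap\partial_G(T)|\le 2$ while $|\partial_G(T)|-|B_2\cap\partial_G(T)|\ge 2$, and every non-$B_2$ edge of $\partial_G(T)$ lies in $D_1'\cup C_2$). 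Neither the recursion nor this connectivity argument is recoverable from a purely local charging around blue vertices, so as it stands the proposal does not constitute a proof.
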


\begin{proof}[Proof of Lemma~\ref{lemmathm3}.] The proof is by complete induction on $|E(T) \cap (Y(P_1) \Delta Y(P_2))|$. 

If $|B_2 \cap E^+(T)| = 0$, then the lemma holds. Assume for the rest of the proof that $|B_2 \cap E^+(T)| \geq 1$. 

By the assumption of the lemma and by Claim 5,
$|\partial_G(T)| \le |Y(P_1) \Delta Y(P_2)|=|E(V_1,V_2)| \le 4l-6$.
Thus $T$ has at most $4l-8$ vertices and the edges of $E^+(T)$ are close to $P_2$. By Claim 1, they are distant from $P_1$. 
Thus if an edge from  $E^+(T)$ belongs to $Y(P_1)$, then it must be from $D_1'$. On the other hand, if an edge from  $E^+(T)$ belongs to $Y(P_2)$, then it is either from $B_2$ or from $C_2$.
\smallskip

If $T$ is an isolated vertex, then either $|\partial_G(T)\cap B_2|=1$ or $|\partial_G(T)\cap B_2|=2$. Let $e\in \partial_G(T)- B_2$.  By Claim 2, $e\notin C_2\cup D_2$. Thus, $e\in Y(P_1)$, and hence $e\in D'_1$. Therefore,

\begin{eqnarray}
|B_2 \cap E^+(T)| \leq 2|D'_1 \cap E^+(T)| \nonumber
\end{eqnarray}

\noindent
and the lemma holds.

Assume for the rest of the proof that $T$ is not an isolated vertex. If $|\partial_G(T)|\geq 2l$, then since $|B_2|\leq l$, the following holds.

\begin{eqnarray}
|B_2 \cap \partial_G(T)| \leq |D'_1 \cap \partial_G(T)| + |C_2 \cap \partial_G(T)|. \label{eqinHpart}
\end{eqnarray}
If, on the other hand, $|\partial_G(T)|< 2l$, then $E(P_2) \cap E^+(T)$ induces a connected graph in $G$. 
Indeed, if $P_2 \cap E^+(T)$ induces a graph with two components there exists a subpath $Q$ of $P_2$ of length at most $l$
that connects two vertices of $T$ and for which $E(Q)\cap E(T)=\emptyset$.
As $T$ is connected, endvertices of $Q$ can be connected by a path $Q'$ in $T$. Since $|\partial_G(T)|< 2l$ and $T$ is a tree, $|V(T)|<2l-2$ and $Q'$ has at most $2l-4$ edges. 
The paths $Q$ and $Q'$ form a cycle of total length at most $3l-4$.
As $G$ is cyclically $(4l-5)$-edge-connected, this contradicts Lemma~\ref{lemaCyklicka}. 
Thus $E(P_2) \cap E^+(T)$ induces a connected graph in $G$.
By Claim 7, $E^+(T)$ is acyclic, hence $|B_2 \cap \partial_G(T)| \le 2$. But $|\partial_G(T)-B_2| \ge 2$ as $T$ is not an isolated vertex. Thus the inequality~(\ref{eqinHpart}) holds.
\medskip

Let $T_0$ be a union of such components of $T-(Y(P_1) \Delta Y(P_2))$ that contain a vertex of $T$ that is incident with an edge from $\partial_G(T)$. Recall that the vertices of $T$ incident with an edge from $\partial_G(T)$ all belong either to $V_1$ or to $V_2$, say $V_1$. Thus the same holds for the vertices of $T_0$ that are incident with $\partial_G(T)$. Therefore all vertices of $T_0$ must also belong to $V_1$, and so $T_0$ is an induced subgraph of $G$. Hence if $e\in E(T_0)\cap B_2$, then $e\in Y(P_1)$ and so $e\in D_1'$.  Thus
\begin{eqnarray}
|B_2 \cap E(T_0)| \leq |D'_1 \cap E(T_0)|. \label{eqinHpart2}
\end{eqnarray}

Consider a component $T'$ of $T-V(T_0)$. Let us show that $T'$ satisfies the conditions of this lemma. It is easy to see that $T'$ is an induced subtree of $G-V(H)$. By the definition of $T_0$, $\partial_G(T')\subseteq Y(P_1) \Delta Y(P_2)$ and all edges of $\partial_G(T')$ must have one end-vertex in $T_0$ and the other end-vertex in $T'$. Since the first one belongs to $V_1$, the second one must be in $V_2$. Since $T'$ does not contain $\partial_G(T')$, $|E(T') \cap (Y(P_1) \Delta Y(P_2))|< |E(T) \cap (Y(P_1) \Delta Y(P_2))|$. Applying induction to $T'$,  
\begin{eqnarray}
|B_2 \cap E^+(T')| \leq 2 |D'_1 \cap E^+(T')| + |C_2 \cap E^+(T')| \label{eqinHpart3}.
\end{eqnarray}
The inequalities (\ref{eqinHpart}), (\ref{eqinHpart2}), together with
the inequality  (\ref{eqinHpart3}) applied to each component $T'$ of $T-V(T_0)$ imply the statement of the lemma.
\end{proof}

By Claim~8, it follows from (\ref{eqinH}) and from Lemma~\ref{lemmathm3} that $|B_2|\le 2|D'_1|+|C_2|$. This is a contradiction with (\ref{eqd1c2}) by Claim 3, which concludes the proof of the theorem. 
\end{proof}


\section{On $3$-paths and $4$-paths}\label{s4}
\begin{lemma}\label{lemaFarb}
Let $G$ be a graph, let $A, B\subseteq E(G)$ such that $G-A$ and $G-B$ are bipartite with partites $V^A_1, V^A_2$ and $V^B_1,V^B_2$, respectively. Moreover, assume that the endpoints of each edge of $A$ belong to the same partite of $G-A$ and the endpoints of each edge of $B$ belong to the same partite of $G-B$.
Then there exists a partition of $V(G)$ into $V_1$ and $V_2$ (one of the sets may be empty) such that 
\begin{enumerate}
\item $E(V_1,V_2)=A\Delta B$, and 
\item $V^B_1=V^A_1\Delta V_1 = V^A_2\Delta V_2$ and $V^B_2=V^A_2\Delta V_1 = V^A_1\Delta V_2$.
\end{enumerate} 
\end{lemma}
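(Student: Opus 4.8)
The plan is to exhibit the desired partition explicitly by ``adding'' the two given bipartitions over $\mathbb{F}_2$. Concretely I would set $V_1 := V^A_1 \Delta V^B_1$ and $V_2 := V(G)\setminus V_1$, and then verify both conclusions by pure symmetric-difference algebra. The two facts I would lean on throughout are that the subsets of $V(G)$ (or of $E(G)$) under $\Delta$ form an abelian group of exponent $2$, so that $X\Delta X=\emptyset$ and every element is its own inverse, and that complementation satisfies $(U\setminus X)\Delta(U\setminus Y)=X\Delta Y$ for $X,Y\subseteq U$.

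Conclusion~(2) is then essentially bookkeeping. With $V_1=V^A_1\Delta V^B_1$ one gets $V^A_1\Delta V_1=V^A_1\Delta V^A_1\Delta V^B_1=V^B_1$ immediately, and the remaining three identities follow from this one by applying the complementation rule to $V_2=V(G)\setminus V_1$, $V^A_2=V(G)\setminus V^A_1$ and $V^B_2=V(G)\setminus V^B_1$. For instance $V^A_2\Delta V_2=(V(G)\setminus V^A_1)\Delta(V(G)\setminus V_1)=V^A_1\Delta V_1=V^B_1$, while $V^A_1\Delta V_2=V(G)\Delta(V^A_1\Delta V_1)=V(G)\setminus V^B_1=V^B_2$, and similarly $V^A_2\Delta V_1=V^B_2$. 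So (2) merely records that $V_1$ is the $\mathbb{F}_2$-sum of the two given bipartitions and carries no real difficulty.

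For conclusion~(1) the key is the standard observation that assigning to a vertex bipartition its edge-cut is a homomorphism from the $\mathbb{F}_2$-space of vertex subsets to that of edge subsets; in symmetric-difference form this reads $E(V_1,V_2)=E(V^A_1,V^A_2)\,\Delta\,E(V^B_1,V^B_2)$. I would prove this edge by edge: an edge $xy$ is split by $V_1=V^A_1\Delta V^B_1$ exactly when the indicators $[x\in V^A_1]\oplus[x\in V^B_1]$ and $[y\in V^A_1]\oplus[y\in V^B_1]$ differ, which happens precisely when $xy$ is split by exactly one of the two original bipartitions. It then remains to identify $E(V^A_1,V^A_2)$ with $E(G)\setminus A$ and $E(V^B_1,V^B_2)$ with $E(G)\setminus B$, after which the complementation rule finishes the computation, since $(E(G)\setminus A)\Delta(E(G)\setminus B)=A\Delta B$.

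The one genuine point to pin down, and the step I expect to need the most care, is exactly that identification $E(V^A_1,V^A_2)=E(G)\setminus A$. Bipartiteness of $G-A$ with respect to $(V^A_1,V^A_2)$ gives only the inclusion $E(G)\setminus A\subseteq E(V^A_1,V^A_2)$, because every surviving edge is split; the reverse inclusion requires that $A$ contain no split edge, i.e.\ that $A$ be precisely the set of edges with both ends in the same class of the bipartition. This holds in the intended use, where $A=Y(P)$ and $B=Y(Q)$ are by construction the sets of monochromatic edges (all edges induced by $V(G)\setminus I(P)$ together with the inner path edges inside $I(P)$), so I would state this as the working hypothesis under which the equality, and hence~(1), is established.
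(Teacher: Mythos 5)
Your proposal is correct and follows essentially the same route as the paper's proof: the paper also sets $V_1=V^A_1\Delta V^B_1$, gets statement~2 by the same symmetric-difference bookkeeping, and gets statement~1 from $\partial_G(V^A_1\Delta V^B_1)=\partial_G(V^A_1)\Delta\partial_G(V^B_1)$ together with the identification $\partial_G(V^A_1)=E(G)\setminus A$. The only cosmetic difference is that the paper phrases the latter step in the language of signed-graph switchings (``switching at $V^A_1$ in $(G,A)$ results in $(G,E(G))$'', i.e.\ $A\Delta\partial_G(V^A_1)=E(G)$), whereas you compute with indicator functions over $\mathbb{F}_2$; these are the same calculation.

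The point you isolate as the delicate one is genuinely a gap in the lemma as literally stated, and the paper's own proof passes over it silently: the switching claim $A\Delta\partial_G(V^A_1)=E(G)$ is exactly the assertion $\partial_G(V^A_1)=E(G)\setminus A$, and bipartiteness of $G-A$ only yields the inclusion $E(G)\setminus A\subseteq\partial_G(V^A_1)$. Without the extra hypothesis that no edge of $A$ crosses $(V^A_1,V^A_2)$ the conclusion can fail; for instance, if $G$ is a single edge $xy$, $A=\{xy\}$ with $V^A_1=\{x\}$, $V^A_2=\{y\}$, and $B=\emptyset$ with $V^B_1=\{x\}$, $V^B_2=\{y\}$, then statement~2 forces $V_1=\emptyset$ while statement~1 demands $E(V_1,V_2)=\{xy\}$. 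In every application in the paper the hypothesis does hold, since $Y(P)$, $\{p_i,q_i\}$ and the sets $X_i$ are by construction precisely the monochromatic edges of the respective bipartitions, so nothing downstream is affected; but your explicit working hypothesis is the correct way to state and prove the lemma, and it would be a worthwhile addition to its statement.
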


\begin{proof}
Let us consider the signed graphs $(G,A)$ and $(G,B)$. Note that switching at vertices of $V^A_1$ in $(G,A)$ results in an all-negative signed graph $(G,E(G))$. Similarly, $(G,B)$ can be switched to $(G,E(G))$, and hence $(G,A)$ and $(G,B)$ are equivalent. 
Thus we can obtain $(G,B)$ from $(G,A)$ by switching at the vertices
$V^A_1\Delta V^B_1$. Let $V_1=V^A_1\Delta V^B_1$ and $V_2=V(G)-V_1$ ($=V^A_2\Delta V^B_1$).  
As $V^A_1\Delta V^B_1=V^A_2\Delta V^B_2$ and $V^A_2\Delta V^B_1=V^A_1\Delta V^B_2$ the statement 2 of this lemma holds.

Note that $E(V_1,V_2)=\partial_G(V_1)=\partial_G(V^A_1\Delta V^B_1)=
\partial_G(V^A_1) \Delta \partial(V^B_1)$.
Since switching at all vertices of a set $W$ changes signs exactly of the edges in $\partial(W)$, from the fact that switching at $V^A_1$ in $(G, A)$ produces $(G,E(G))$ 
and switching at $V^B_1$ in $(G, B)$ produces $(G,E(G))$ 
we derive
$A \Delta \partial_G(V^A_1)=E(G)=B \Delta \partial_G(V^B_1)$
and thus
$\partial_G(V^A_1) \Delta \partial_G(V^B_1)=A \Delta B$
which concludes the proof of the statement 1 of this lemma.
\end{proof}

\begin{figure}[p]
\begin{center} 
\includegraphics[scale=1.2]{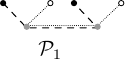} \\
\ \\
\includegraphics[scale=1.2]{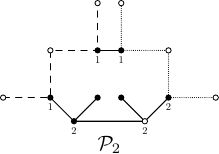} \ \ \ \ 
\includegraphics[scale=1.2]{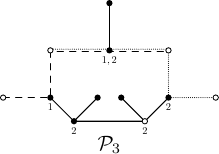} \\
\ \\
\includegraphics[scale=1.2]{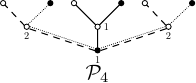} \ \ \ \ 
\includegraphics[scale=1.2]{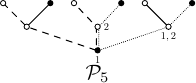} \\
\ \\
\includegraphics[scale=1.2]{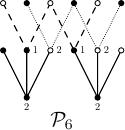} \ \ \ \ 
\includegraphics[scale=1.2]{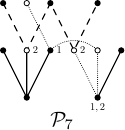} \ \ \ \ 
\includegraphics[scale=1]{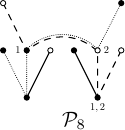} \\
\ \\
\includegraphics[scale=1.2]{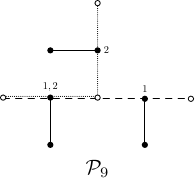} \ \ \ \
\includegraphics[scale=1.2]{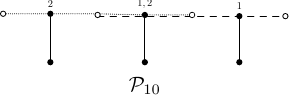} \\
\ \\
\includegraphics[scale=1.2]{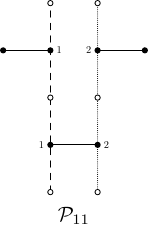} \ \ \ \ 
\includegraphics[scale=1.2]{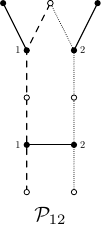} \ \ \ \ 
\includegraphics[scale=1.2]{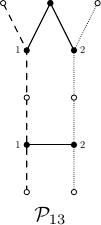} 
\caption{Positions $\mathcal{P}_1$, $\mathcal{P}_2$, \ldots, $\mathcal{P}_{13}$ of paths related to Theorems~\ref{thm5} and~\ref{thm6}. The respective paths are depicted by dashed and dotted lines.}
\label{fig34}
\end{center}
\end{figure}

Let $G$ be a graph and let $P_1=v_1v_2v_3v_4$ and $P_2=w_1w_2w_3w_4$ be two paths of $G$ of length $3$. We say that $P_1$ and $P_2$ are in the \textit{position $\mathcal{P}_1$} if $v_2v_3=w_2w_3$, $|\{v_1,v_4,w_1, w_4\}|=4$, and $G-\{v_2,v_3\}$ admits a proper (vertex) $2$-colouring $c$ such that $c(v_1)=c(v_4)\neq c(w_1)=c(w_4)$. This position is depicted on the top of Figure~\ref{fig34}.

\begin{theorem} \label{thm5}
Let $G$ be a cyclically $5$-edge-connected cubic graph and let $P_1$ and $P_2$ be
two distinct paths of $G$ of length $3$. Then there is a $2$-factor of $G$ that contains $P_1$ or $P_2$ if and only if $P_1$ and $P_2$ are not in the position $\mathcal{P}_1$.
\end{theorem}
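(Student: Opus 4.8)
The plan is to prove the two implications separately; throughout write $n=|V(G)|$ and label $P_1=v_1v_2v_3v_4$, $P_2=w_1w_2w_3w_4$. We may assume $G\notin\{K_2^3,K_4,K_{3,3}\}$, so that by Lemma~\ref{lemaCyklicka} the girth of $G$ is at least $5$ (the exceptional graphs being disposed of directly, as every short path extends there).

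For the direction asserting that paths in position $\mathcal{P}_1$ cannot be extended, I would start from the colouring $c$ of $G-\{v_2,v_3\}$ supplied by $\mathcal{P}_1$ and let $A$, $B$ be its colour classes with $v_1,v_4\in A$ and $w_1,w_4\in B$. The one nontrivial step is a degree count: since $A$ and $B$ are independent in $G$, and each of $v_2,v_3$ sends exactly one edge into $A$ and one into $B$, counting the edges between $A$ and $B$ from both sides gives $3|A|-2=3|B|-2$, hence $|A|=|B|=n/2-1$. Then $A\cup\{v_2,v_3\}$ is an independent set of $G-E(P_1)$ of size $n/2+1>n/2$ (the only remaining neighbours of $v_2,v_3$, namely $w_1,w_4$, lie in $B$), so $G-E(P_1)$ has no perfect matching; symmetrically $B\cup\{v_2,v_3\}$ blocks $P_2$. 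This direction is routine once the balance $|A|=|B|$ is noticed.

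For the converse I assume neither path extends. Applying Theorem~\ref{thm3} to $X=E(P_i)$ (the three edges of a path never share a vertex, so alternative (a) is excluded) shows that $G-E(P_i)$ is bipartite with classes $(A_i,B_i)$ and $V(P_i)\subseteq A_i$; the same degree count forces $|A_i|=n/2+1$, $|B_i|=n/2-1$, and hence $Y(P_i)=E(P_i)$. By Lemma~\ref{lemaZas} (equivalently Lemma~\ref{lemaFarb} with $A=E(P_1)$, $B=E(P_2)$) there is a partition $(V_1,V_2)$ with $E(V_1,V_2)=E(P_1)\,\Delta\,E(P_2)$ and $V_1=A_1\,\Delta\,A_2$. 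Writing $s=|E(P_1)\cap E(P_2)|$ we have $|E(V_1,V_2)|=6-2s\in\{2,4,6\}$, the value $s=3$ being excluded as $P_1\neq P_2$, and I split according to $s$. If $s=2$, then $E(V_1,V_2)$ is a $2$-edge-cut, impossible in a cyclically $5$-edge-connected (hence $3$-edge-connected) cubic graph. If $s=1$, a forest bound shows that the acyclic side of the $4$-edge-cut has at most two vertices, so the smaller side is a single edge $\{x,y\}$ with $E(V_1,V_2)=\partial_G(\{x,y\})$; matching the three edges of each $P_i$ against $\partial_G(\{x,y\})$ then forces the shared edge to be the common middle edge $v_2v_3=w_2w_3$ and $V_1=\{v_2,v_3\}$, any other placement either pushing an end-edge outside the cut or producing a triangle or a $4$-cycle against the girth bound. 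One then reads off $\mathcal{P}_1$ from the bipartition $(A_1\setminus\{v_2,v_3\},\,B_1)$ of $G-\{v_2,v_3\}$, checking that $v_1,v_4$ and $w_1,w_4$ fall into opposite classes.

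The case $s=0$ (disjoint paths, a $6$-edge-cut) is the main obstacle, since cyclic $5$-edge-connectivity by itself permits a $6$-edge-cut to separate two cyclic parts. Here every edge of each $P_i$ lies in the cut, so both paths alternate across $(V_1,V_2)$: $v_1,v_3,w_1,w_3$ lie on one side and $v_2,v_4,w_2,w_4$ on the other. Writing $t=|A_1\cap B_2|=|B_1\cap A_2|$ one gets $V_1=(A_1\cap B_2)\cup(B_1\cap A_2)$ of size $2t$, with $G[V_1]$ bipartite and $e(G[V_1])=3(t-1)$ (so $V_1$ is cyclic exactly when $t\ge 3$). I would then rule out every admissible $t\ge 2$ by three complementary arguments: for $t=2$ one has $v_1\sim w_1\sim v_3$ inside $V_1$, which with the subpath $v_1v_2v_3$ closes a $4$-cycle $v_1v_2v_3w_1$, contradicting the girth bound; for $t\ge n/2-2$ there is no room for the four distinct vertices $v_2,v_4,w_2,w_4$ inside $A_1\cap A_2$, whose size is $n/2+1-t$; and for the middle range $3\le t\le n/2-3$, transferring $v_2$ and $w_2$ from $V_2$ to $V_1$ shrinks the cut to four edges while both resulting sides still contain a cycle, again contradicting cyclic $5$-edge-connectivity. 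As these ranges exhaust the possibilities, the disjoint case cannot occur, so only $s=1$ with shared middle edge survives, which completes the classification and the proof.
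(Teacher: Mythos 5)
Your two directions are individually sound in outline, but the forward direction takes a genuinely different route from the paper, and that route has one unproven step. The paper never forms $E(P_1)\,\Delta\,E(P_2)$: instead it replaces each $E(P_i)$ by a two-element set $\{p_i,q_i\}$ (an end-edge of $P_i$ together with the off-path edge at the far inner vertex), whose removal already makes $G$ bipartite. Then $\{p_1,q_1\}\,\Delta\,\{p_2,q_2\}$ is an edge-cut of size at most $4$, so cyclic $5$-edge-connectivity immediately forces one side to be a single edge, and the whole classification reduces to a few girth checks; in particular the troublesome configuration of two edge-disjoint paths never has to be confronted as a $6$-edge-cut. Your version keeps $X_i=E(P_i)$ and must therefore dispose of a possible $6$-edge-cut with both sides cyclic, which is exactly what costs you the three-range analysis in $t$. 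What your approach buys is that it applies Theorem~\ref{thm3} and Lemma~\ref{lemaFarb} verbatim with no auxiliary edge sets; what the paper's reduction buys is that the $s=0$ case evaporates.

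The gap is in the range $t\ge n/2-2$, where you argue there is no room for ``the four distinct vertices $v_2,v_4,w_2,w_4$'' inside $A_1\cap A_2$. But $s=0$ only says the paths are edge-disjoint, not vertex-disjoint: in a cubic graph an inner vertex of $P_1$ may coincide with an end-vertex of $P_2$ (e.g.\ $v_2=w_4$, the three edges at that vertex being $v_1v_2$, $v_2v_3$ and $w_3w_4$), and nothing you have established excludes this. If only two or three of $v_2,v_4,w_2,w_4$ are distinct, your counting bound weakens to $t\le n/2-1$ or $t\le n/2-2$, and the value $t=n/2-2$ (and possibly $t=n/2-1$) is covered by none of your three arguments, so the case split for $s=0$ is not exhaustive as written. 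These residual cases can in fact be killed --- there $|V_2|\in\{2,4\}$ and $G[V_2]$ is a small tree whose edges, combined with the subpath $v_2v_3v_4$ or $w_2w_3w_4$, close a $4$-cycle against the girth bound --- but you must either supply that argument or prove the distinctness. Everything else (the balance count $|A_i|=n/2+1$, the $s\in\{1,2\}$ analysis, and the easy direction, which matches the paper's) checks out.
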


\begin{proof}
If $G\in\{K_2^3,K_4,K_{3,3}\}$, then the theorem holds. Thus we assume that $G\notin\{K_2^3,K_4,K_{3,3}\}$, and we can use Lemma~\ref{lemaCyklicka} without considering these exceptions.
Let $i\in\{1,2\}$. Suppose that $P_i$ cannot be extended into a $2$-factor of $G$. Equivalently, $G-E(P_i)$ has no perfect matching. By Theorem~\ref{thm3}, $G-E(P_i)$ is bipartite and edges from $E(P_i)$ are incident with vertices from the same partite. 
Let $p_i$ be an edge of $P_i$ incident with an end-vertex of $P_i$, and let $q_i$ be an edge of $G-E(P_i)$ that is incident with the central vertex of $P_i-p_i$. Note that $G-\{p_i,q_i\}$ is bipartite. Moreover, there are two choices how to choose $p_1$ and $q_1$ thus we may assume that $\{p_1, q_1\} \neq \{p_2, q_2\}$.

By the first statement of Lemma~\ref{lemaFarb}, $\{p_1,q_1\}\Delta \{p_2,q_2\}$ is an edge-cut of $G$. Since $G$ is cyclically $5$-edge-connected and cubic, $|\{p_1,q_1\}\Delta \{p_2,q_2\}|\neq 2$. Thus, $\{p_1,q_1\}\Delta \{p_2,q_2\}$ is a $4$-edge-cut, and, again by cyclic edge-connectivity, one side of the edge-cut $\{p_1,q_1\}\Delta \{p_2,q_2\}$ induces an edge, which we denote by $e$. 

The edges $p_i$ and $q_i$ are not adjacent, because $G$ does not contain any triangle according to Lemma~\ref{lemaCyklicka}. Again by Lemma~\ref{lemaCyklicka}, $G$ has no $4$-circuit and therefore, $e$ is the only edge that is neighbour of $p_i$ and $q_i$ both. Thus $e\in P_i$. 

If $p_1$ is adjacent to $q_2$, then $p_2$ is adjacent to $q_1$. It follows that $P_1=P_2$, which contradicts that $P_1$ and $P_2$ are distinct.
Thus $p_1$ is adjacent to $p_2$, $q_1$ is adjacent to $q_2$,
 and $E(P_i)=\{p_i,e,q_{3-i}\}$, for $i\in\{1,2\}$.  Let $P_1=v_1v_2v_3v_4$, $P_2=w_1w_2w_3w_3$ with the notation chosen in such a way that $p_1=v_1v_2$ and $p_2=w_1w_2$ (and thus $q_1=w_3w_4$ and $q_2=v_3v_4$). 

Denote the partites of $G-\{p_1,q_1\}$ by $V_1^{P_1}$ and $V_2^{P_1}$ in such a way that $v_1 \in V_1^{P_1}$.
Denote the partites of $G-\{p_2,q_2\}$ by $V_1^{P_2}$ and $V_2^{P_2}$ in such a way that $v_1 \in V_1^{P_2}$. Let $V_1$ and $V_2$ be a partition of $V(G)$ guaranteed by Lemma~\ref{lemaFarb}. Thus $E(V_1,V_2)=\{p_1,q_1,p_2,q_2\}$ is an edge-cut of $G$.
Since $v_1 \in V_1^{P_1} \cap V_1^{P_2}$, the second statement of Lemma~\ref{lemaFarb}
implies that $v_1 \in V_2$. Therefore $V_1=\{v_2,v_3\}$, 
$V^{P_2}_1=V^{P_1}_1\Delta \{v_2,v_3\}$ and
$V^{P_2}_2=V^{P_1}_2\Delta \{v_2,v_3\}$.

Thus
$v_1 \in V_1^{P_1} \cap V_1^{P_2}$ implies that
$v_2 \in V_1^{P_1} \cap V_2^{P_2}$,
$w_1 \in V_2^{P_1} \cap V_2^{P_2}$,
$v_3 \in V_2^{P_1} \cap V_1^{P_2}$,
$v_4 \in V_1^{P_1} \cap V_1^{P_2}$, and
$w_4 \in V_2^{P_1} \cap V_2^{P_2}$.
We define a proper vertex $2$-colouring of $G-\{v_2,v_3\}$.
Colour the vertices of $V_1^{P_1}-\{v_2\}$ with white and vertices of $V_2^{P_1}-\{v_3\}$ with black.  For this colouring $c$ we have $c(v_1)=c(v_4)\neq c(w_1)=c(w_4)$.
Thus $P_1$ and $P_2$ are in the position $\mathcal{P}_1$.

For the converse, suppose that $P_1$ and $P_2$ are in the described position, and let $i\in\{1,2\}$. Then the vertices of the colour used for end-vertices of $P_i$ together with the end-vertices of $e$ form a large independent set satisfying Theorem~\ref{thm3} (b) for $P_i$. Thus, there is no perfect matching avoiding $P_i$, and consequently, $P_i$ does not belong to any $2$-factor. 
\end{proof}

Let $G$ be a graph and let $P_1$ and $P_2$ be two paths of $G$ of length $4$. We say that $P_1$ and $P_2$ are in the \textit{position $\mathcal{P}_j$}, for $j\in\{2,3,4,5,6,7,8\}$, if their positions correspond to the respective part of Figure~\ref{fig34} and $G$ admits a (vertex) $2$-colouring that is proper except of three monochromatic edges depicted on the figure. The paths are depicted by dashed and dotted lines and all the displayed vertices are distinct.

\begin{theorem} \label{thm6}
Let $G$ be a cyclically $7$-edge-connected cubic graph and let $P_1$ and $P_2$ be two paths of $G$ of length $4$. Then there is a $2$-factor of $G$ that contains $P_1$ or $P_2$ if and only if
\begin{itemize}
\item there is a $2$-factor containing all the edges in $E(P_1) \cap E(P_2)$ (this is always true if $|E(P_1) \cap E(P_2)| \le 2$), and
\item for each two subpaths $P'_1$ of $P_1$ and $P'_2$ of $P_2$, both on $3$ edges,  $P'_1$ and $P'_2$ are not in the position $\mathcal{P}_1$, and 
\item the paths $P_1$ and $P_2$ are not in position $\mathcal{P}_i$ for each $i\in \{2, 3, \dots, 13\}$.
\end{itemize}
\end{theorem}

\begin{proof}
If $G\in\{K_2^3,K_4,K_{3,3}\}$, then the theorem holds. Thus we assume that $G\notin\{K_2^3,K_4,K_{3,3}\}$, and we can use Lemma~\ref{lemaCyklicka} without considering these exceptions.
Let $i\in\{1,2\}$. Suppose first that $P_i$ cannot be extended into a $2$-factor of $G$. Equivalently, $G-E(P_i)$ has no perfect matching. By Lemma~\ref{lemaLM}, $G-E(P_i)$ has an independent set containing more than half of the vertices. Let $I(P_i)$ be defined in the same way as in Section~3; it is an independent set in $G-E(P_i)$ that contains more than half of the vertices and that has maximum intersection with $V(P_i)$ among all such sets. Due to this definition all inner vertices of $P_i$ belong to $I(P_i)$. Let us show that at least one end-vertex of $P_i$ also belongs to $I(P_i)$. 

Let $J(P_i):=V(G)-I(P_i)$, and let $s$ ($t$, respectively) be the number of edges in $G$ induced by $I(P_i)$ ($J(P_i)$, respectively). Note that $s\leq 4$, because $I(P_i)$ is an independent set in $G-E(P_i)$ and $|E(P_i)|=4$. Then $\partial_G(I(P_i))=3|I(P_i)|-2s=3|J(P_i)|-2t=\partial_G(J(P_i))$. If $|I(P_i)|\geq |J(P_i)|+4$, then $s\geq 6+t$, which is impossible since $s\leq 4$. Since $|I(P_i)|>|J(P_i)|$, $|I(P_i)|= |J(P_i)|+2$ and $s=t+3$. Hence either $s=3$ or $s=4$, and thus either $|V(P_i)\cap I(P_i)|=4$ or $|V(P_i)\cap I(P_i)|=5$, respectively. In the former case, $J(P_i)$ is an independent set, and $G-E(P_i)$ is bipartite. In the latter case, $J(P_i)$ induces a graph with one edge, denoted by $r_i$. 

Consider now the case when $|V(P_i)\cap I(P_i)|=5$.
Note that the end-vertices of $r_i$ belong to a different partite of $G-(E(P_i)\cup\{r_i\})$ than the vertices of $P_i$.

\smallskip
\noindent
\textbf{Claim 1.} If $|V(P_i)\cap I(P_i)|=5$, then $r_i$ is not adjacent to any edge of $E(P_i)$, for $i\in\{1,2\}$.
\medskip

Let $P_1=v_1v_2v_3v_4v_5$, and let $v_2v'_2$ and $v_4v'_4$ be edges of $G-E(P_1)$. If $|V(P_1)\cap I(P_1)|=4$, then assume, without loss of generality, that $V(P_1)\cap I(P_1)=\{v_1,v_2,v_3,v_4\}$, and let $X_1=\{v_2v'_2,v_3v_4\}$.  
Note that $G-X_1$ is bipartite with one of the partite sets being $J(P_1)\cup\{v_2\}$. If $|V(P_1)\cap I(P_1)|=5$, then 
we define $X_1=\{v_2v'_2,v_4v'_4,r_1\}$.
The graph $G-X_1$ is bipartite with one of the partite sets being $J(P_1)\cup\{v_2,v_4\}$. Moreover, all end-vertices of the edges from $X_1$ belong to the same partite of $G-X_1$. 
For $P_2$ we define $X_2$ analogously and we obtain the following claims.

\smallskip
\noindent
\textbf{Claim 2.} 
If $|V(P_i)\cap I(P_i)|=4$, then the graph $G-X_i$ is bipartite, for $i\in\{1,2\}$. 
\smallskip

\smallskip
\noindent
\textbf{Claim 3.} 
If $|V(P_i)\cap I(P_i)|=5$, then the graph $G-X_i$ is bipartite. All end-vertices of the edges from $X_i$ connect vertices from the same partite of $G-X_i$, for $i\in\{1,2\}$. 
\smallskip

We consider four cases.
\smallskip

\textbf{Case 1.} $|V(P_1)\cap I(P_1)|=4$ and $|V(P_2)\cap I(P_2)|=4$.
\\ If paths $v_1v_2v_3v_4$ and $w_1w_2w_3w_4$ are distinct then they fulfil assumptions of Theorem~\ref{thm5}. It follows that the paths $v_1v_2v_3v_4$ and $w_1w_2w_3w_4$ are in the position $\mathcal{P}_1$. If they are equal, then either
$v_1=w_1$ or $v_1=w_4$. If $v_1=w_1$ and $v_5=w_5$, then the paths are identical and the desired $2$-factor exists only if there exist a $2$-factor containing all the edges in the intersection. 
If $v_1=w_1$ and $v_5\neq w_5$, then if there exists a $2$-factor containing the edges in the intersection, it has to continue to either $v_5$ or $w_5$.
Thus no such $2$-factor exists, a contradiction.

On the other hand if, $v_1=w_4$ (and thus $v_2=w_3$, $v_3=w_2$, and $v_4=w_1$). Due to the choice of $v_5$ the set $I(P_1)$ contains $v_4$ and does not contain $v_5$.
But then $I(P_1)$ is an independent set in $G-\{v_1v_2, v_2v_3, v_3v_4\}$ and it contains more than half of the vertices of $G$. Thus there is no $2$-factor containing $E(P_1) \cap E(P_2)$, a contradiction.
\smallskip

\textbf{Case 2.} $|V(P_1)\cap I(P_1)|=4$ and $|V(P_2)\cap I(P_2)|=5$.
\\By Claims 2 and 3 and by Lemma~\ref{lemaFarb}, $X_1\Delta X_2$ is an edge-cut of $G$. Since $G$ is cyclically $7$-edge-connected and cubic, $|X_1\Delta X_2|\neq 1$. Thus, either $|X_1 \Delta X_2|=3$ or $|X_1\Delta X_2|=5$. 

Assume first that $|X_1\Delta X_2|=3$. Due to cyclic edge-connectivity of $G$, one side of $G-(X_1\Delta X_2)$ must be a vertex $a$. Up to symmetry of $w_2w'_2$ and $w_4w'_4$, there are four options: $v_3v_4=w_2w'_2$, $v_2v'_2=w_2w'_2$, $v_3v_4=r_2$, $v_2v'_2=r_2$. In the first two cases, by Claim~1, $a=w'_4$. By Lemma~\ref{lemaCyklicka}, none of $\{w_1,w_2,w'_2,w_3,w_5\}$ is incident with an edge from $X_1\Delta X_2$. Since one of the edges from $\{v_3v_4,v_2v'_2\}$ is an edge from $X_1\Delta X_2$ while the other one equals $w_2w'_2$, the set of edges $\{v_2v'_2,v_2v_3,v_3v_4,w_2w_3,w_3w_4,w_4w'_4\}$ induces a short cycle, contradicting Lemma~\ref{lemaCyklicka}.
In the latter two cases, since $w_2w'_2$ are $w_4w'_4$ are both incident with $a$, there is a short cycle as well, a contradiction with Lemma~\ref{lemaCyklicka}. 

Assume now that $|X_1\Delta X_2|=5$. Due to cyclic edge-connectivity of $G$, one side of $G-(X_1\Delta X_2)$ must induce a path $abc$ on three vertices $a,b,c$. Note that $w_2w'_2$ and $w_4w'_4$ cannot be incident either with the same vertex or with two adjacent vertices, as otherwise $G$ would contain a $4$-circuit or a $5$-circuit, respectively, a contradiction with Lemma~\ref{lemaCyklicka}. Hence, without loss of generality, we may assume that $w_2w'_2$ is incident with $a$ and $w_4w'_4$ is incident with $c$. By Claim 1, $w_2\neq a$ and $w_4\neq c$. We conclude that the path $\{w_2, w_3, w_4, a, b, c\}$ induce either a $4$-circuit or a $6$-circuit, a contradiction.
\smallskip

\textbf{Case 3.} $|V(P_1)\cap I(P_1)|=5$ and $|V(P_2)\cap I(P_2)|=4$.
\\This case is equivalent to Case~2. 
\smallskip

\textbf{Case 4.} $|V(P_1)\cap I(P_1)|=5$ and $|V(P_2)\cap I(P_2)|=5$.
\\By Claim~3 and by Lemma~\ref{lemaFarb}, $X_1\Delta X_2$ is an edge-cut. Since $G$ is cyclically $7$-edge-connected and cubic, $|X
_1\Delta X_2|\neq 2$. Thus, either 
$X_1=X_2$, or $|X_1\Delta X_2|=4$ or $|X_1\Delta X_2|=6$.
\smallskip

\textbf{Case 4a.} $X_1 = X_2$.
\\We may without loss of generality assume that $v_2v'_2=w_2w'_2$.
Due to Claim~3, $G-X_1$ is bipartite and edges from $X_1$ are incident with vertices from the same partite.
Assume first $v_2=w_2$ and in addition $v_1=w_1$ and thus $v_3=w_3$. 
If $v_4=w_4$ then also $v_5=w_5$ as Claim 1 implies $v_4v'_4=w_4w'_4$.
Thus assume $v_4\neq w_4$. Due to Lemma~\ref{lemaCyklicka} we have $v_4v'_4=r_2$ and
$w_4w'_4=r_1$ which shows that $P_1$ and $P_2$ are in the position $\mathcal{P}_9$.  Now assume that $v_2=w_2$ and in addition $v_1 = w_3$ and thus $v_3=w_1$. 
Due to Lemma~\ref{lemaCyklicka} we have $v_4v'_4=r_2$ and
$w_4w'_4=r_1$ which shows that $P_1$ and $P_2$ are in the position $\mathcal{P}_{10}$. 

Finally, if $v_2 \neq w_2$ then due to Lemma~\ref{lemaCyklicka} and Claim~1 
$v_1$, $w_1$, $v_2$, $w_2$, $v_3$, $w_3$, $v_4$, $w_4$ are all distinct and $v_4v'_4=r_2$ and
$w_4w'_4=r_1$. It is, however, possible that either $v_5=w_5$ or $v'_4=w'_4$.
This shows that $P_1$ and $P_2$ are in one of the positions $\mathcal{P}_{11}$, $\mathcal{P}_{12}$, $\mathcal{P}_{13}$. 
\smallskip

\textbf{Case 4b.} $|X_1\Delta X_2|=4$. 
\\Due to cyclic edge-connectivity of $G$, one side of $G-(X_1\Delta X_2)$ contains a single edge $ab$. If $v_2v'_2$ ($w_2w'_2$, respectively) is incident with $a$ or $b$, then $v_4v'_4$ ($w_4w'_4$, respectively) is incident neither with $a$ nor with $b$ and vice versa, since otherwise $G$ contains a cycle of length at most 5, contradicting Lemma~\ref{lemaCyklicka}. Hence, we may assume, without loss of generality, that $X_1\Delta X_2=\{v_2v'_2, w_2w'_2,r_1,r_2\}$. Thus $v_4v'_4=w_4w'_4$. 
Let $V_1^{X_1}$ and $V_2^{X_1}$ ($V_1^{X_2}$ and $V_2^{X_2}$, respectively) be partites of $G-X_1$ ($G-X_2$, respectively) such that $a\in V_1^{X_1}$ ($a\in V_2^{X_2}$, respectively). Let $V_1$ and $V_2$ be a partition of $V(G)$ guaranteed by Lemma~\ref{lemaFarb}. Note that $E(V_1,V_2)=X_1\Delta X_2$ and $V_1=\{a,b\}$. Thus $b\in V_2^{X_1} \cup V_1^{X_2}$. Since $a$ and $b$ belong to different partites of $G-X_1$, by Claim~3, it follows that $v_2v'_2$ and $r_1$ are adjacent to the same vertex, say $a$. Then $w_2w'_2$ and $r_2$ are incident with $b$. By Claim~1, $a=v'_2$ and $b=w'_2$. Let $r_1:=aa'$ and $r_2=bb'$. 
Let $U:=\{v_1,v_2,v_3,w_1,w_2,w_3, a,b,a',b'\}$. 
Note that the vertices of $U$ must be all distinct by Lemma~\ref{lemaCyklicka}. Moreover, again by Lemma~\ref{lemaCyklicka}, $v_4\notin U$ and $w_4\notin U$. However, it is possible that $v_4=w_4$.

Assume first that $v_4\neq w_4$. Since $v_4v'_4=w_4w'_4$, $v_4=w'_4$ and $w_4=v'_4$. By definition of $v'_4$ and $w'_4$ and by Lemma~\ref{lemaCyklicka}, $v_5, w_5\notin U\cup\{v_4,w_4\}$ and $v_5\neq w_5$. Colour the vertices according to the partitions $V_1^{X_1}$ and $V_2^{X_1}$. Lemma~\ref{lemaFarb} allows us to determine the colouring of the vertices from $U\cup \{v_4,v'_4,v_5,w_5\}$. This shows that $P_1$ and $P_2$ are in the position $\mathcal{P}_2$. 

Assume now that $v_4=w_4$. Thus $v_4$ is incident with $v_3$ and $w_3$. Since $v_4v'_4=w_4w'_4$ and by definition of $v'_4$ and $w'_4$, $v_4v'_4\notin P_1\cup P_2$ and hence $v'_4=w'_4$ is the third vertex incident with $v_4$. Therefore, $v_5=w_3$ and $w_5=v_3$. Colour the vertices according to the partitions $V_1^{X_1}$ and $V_2^{X_1}$. Lemma~\ref{lemaFarb} allows us to determine the colouring of the vertices from $U\cup \{v_4,v'_4\}$. This shows that $P_1$ and $P_2$ are in the position $\mathcal{P}_3$. 
\smallskip

\textbf{Case 4c.} $|X_1\Delta X_2|=6$.
\\It remains to consider that $|X_1\Delta X_2|=6$.  Due to cyclic edge-connectivity of $G$, one side of $G-(X_1\Delta X_2)$ is either a path on $4$ vertices or a star with three pendant vertices or two isolated vertices.

Let one side of $G-(X_1\Delta X_2)$ be a path on $4$ vertices $a$, $b$, $c$ and $d$. By Lemma~\ref{lemaCyklicka}, $v_2v'_2$ and $v_4v'_4$ are not adjacent. By Claim~3, $v_2v'_2$, $v_4v'_4$ and $r_1$ must be incident with $a$ and $c$, or with $b$ and $d$, say, without loss of generality, $v_2v'_2$ and $r_1$ are incident with $a$ and $v_4v'_4$ is incident with $c$. By Claim~1, $a\neq v_2$. This is a contradiction with Lemma~\ref{lemaCyklicka}, since $\{v_2,v_3,v_4,a,b,c\}$ induces a short cycle. 

Let one side of $G-(X_1\Delta X_2)$ be a star with a central vertex $a$ and three pendant vertices $b,c$ and $d$. Note that by Lemma~\ref{lemaCyklicka}, $v_2v'_2$ and $v_4v'_4$ as well as $w_2w'_2$ and $w_4w'_4$ cannot be adjacent. Let, without loss of generality, $v_2v'_2$ and $w_2w'_2$ be incident with $b$, let $v_4v'_4$ be incident with $c$. Note that $a=v_3=w_3$ by Lemma~\ref{lemaCyklicka}, because $v_2v'_2$ and $v_4v'_4$ ($w_2w'_2$ and $w_4w'_4$, respectively) belong to a path of length 4. Thus $b=v_2=w_2$, $c=v_4$, $v_1=w'_2$ and $w_1=v'_2$. 

Let us first assume that $w_4w'_4$ is incident with $c$. Thus $c=w_4$,  $v_5=w'_4$ and $w_5=v'_4$. Moreover, two neighbours of $d$ different from $a$ are distinct from vertices $v_1,v_2,v_3,v_4,v_5,w_1,w_5$. Since $G-X_1$ is bipartite, it follows that $P_1$ and $P_2$ are in the position~$\mathcal{P}_4$.

Let us now assume that $w_4w'_4$ is not incident with $c$, then $d=w_4$, $r_2=v_4v_5$ and $r_1=w_4w_5$. Since $G-X_1$ is bipartite, it follows that $P_1$ and $P_2$ are in the position~$\mathcal{P}_5$.

Finally, let one side of $G-(X_1\Delta X_2)$ be two isolated vertices $a$ and $b$. By Lemma~\ref{lemaCyklicka}, $v_2v'_2$ and $v_4v'_4$ as well as $w_2w'_2$ and $w_4w'_4$ cannot be adjacent. Thus, without loss of generality, we may assume that $v_2v'_2$, $w_2w'_2$ and $r_1$ are incident with $a$ while $v_4v'_4$, $w_4w'_4$ and $r_2$ are incident with $b$. By Claim~1, $a\neq v_2$ and $b\neq w_4$. Let $r_1=aa'$ and $r_2=bb'$. We distinguish cases according to positions $v_4$ and $w_2$.

First, assume that $w_2\neq a$ and $v_4\neq b$. Then $v_3$, which is a neighbour of $v_2$ and $v_4$, is different from $a=v'_2$ and $b=v'_4$. Similarly, $w_3$, which is a neighbour of $w_2$ and $w_4$, is different from $a=w'_2$ and $b=w'_4$. Moreover, $v_1$, $w_1$, $v_5$ and $w_5$ is the third vertex adjacent to $v_2$, $w_2$, $v_4$ and $w_4$, respectively. Due to Lemma~\ref{lemaCyklicka}, the vertices $v_1,v_2,v_3,v_4,v_5,w_1,w_2,w_3,w_4,w_5, a,a',b,b'$ are all distinct. Let $V_1^{X_1}$ and $V_2^{X_1}$ be the partites of $G-X_1$ such that $a\in V_1^{X_1}$. By Claim~3, this is the position $\mathcal{P}_6$.

Second, assume that $w_2\neq a$ and $v_4=b$. Then $w_3$, which is a neighbour of $w_2$ and $w_4$, is different from $a=w'_2$ and $b=w'_4$. Note that $b=v_4=w'_4$ is adjacent to $v'_4,w_4$ and $b'$. The vertex, $v_3$, which is a neighbour of $v_2$ and $v_4$, is different from $v'_4$ by the definition of $v'_4$. Moreover, $v_3$ is different from $w_4$, as otherwise $G$ contains a $5$-cycle $v_2v'_2w_2w_3w_4v_2$, contradicting Lemma~\ref{lemaCyklicka}. Thus, $v_3=b'$ and therefore $v_5=w_4$. Note that by Lemma~\ref{lemaCyklicka}, $v_1,v_2,v'_4,w_1,w_2,w_3,w_4,w_5,a,a',b,b'$ are distinct vertices. Let $V_1^{X_1}$ and $V_2^{X_1}$ be the partites of $G-X_1$ such that $a\in V_1^{X_1}$. By Claim~3, this is the position $\mathcal{P}_7$. 

The case when $w_2=a$ and $v_4\neq b$ is similar to the previous one. This yields the position $\mathcal{P}_7$ of the paths.

Finally assume that $w_2=a$ and $v_4=b$. Since $v_3$ is a neighbour of $v_2$ and $v_4$, and by Lemma~\ref{lemaCyklicka}, either $v_2w_4\in E(G)$ or $v_2b'\in E(G)$. Similarly, since $w_3$ is a neighbour of $w_2$ and $w_4$, and by Lemma~\ref{lemaCyklicka}, either $v_2w_4\in E(G)$ or $w_4a'\in E(G)$. Therefore, either $v_2w_4\in E(G)$ or $\{v_2b',w_4a'\}\subseteq E(G)$. Since $v_2aa'w_4bb'v_2$ would be a $6$-cycle, contradicting Lemma~\ref{lemaCyklicka}, $v_2w_4$ must be an edge of $G$. Thus $w_3=v_2$ and $v_3=w_4$. Consequently $a'=w_1$ and $b'=v_5$. By Lemma~\ref{lemaCyklicka}, $v_1,v_2,v_3,v_4,v_5,v'_4,w_1,w_2,w_5,w'_2$ are all distinct. Let $V_1^{X_1}$ and $V_2^{X_1}$ be the partites of $G-X_1$ such that $a\in V_1^{X_1}$. By Claim~3, this is the position $\mathcal{P}_8$. 
\smallskip

For the converse, if there is no $2$-factor containing the intersection of the paths there is no $2$-factor containing either of them. If any subpaths of length 3 of $P_1$ and $P_2$ are in the position $\mathcal{P}_1$, by Theorem~\ref{thm5} $G$ does not have a $2$-factor containing either of $P_1$ and $P_2$. If the paths $P_1$ and $P_2$ are in the position $\mathcal{P}_j$, for $j\in\{2,3,4,5,6,7,8\}$, then we show that $G-E(P_1)$ and $G-E(P_2)$ contain large independent sets, contradicting Theorem~\ref{thm1}. Consider the vertex colouring depicted in Figure~\ref{fig34}. Note that $G-E(P_1)$ contains a large independent set formed by white vertices if we recolour the vertices with label $1$ to the opposite colour. Similarly, if we recolour the vertices with label $2$ to the opposite colour, we obtain a large independent set in $G-E(P_2)$ formed by white vertices in the positions  $\mathcal{P}_2$, $\mathcal{P}_3$, $\mathcal{P}_9$, $\mathcal{P}_{10}$, $\mathcal{P}_{11}$, $\mathcal{P}_{12}$, and $\mathcal{P}_{13}$ and by black vertices in the positions  $\mathcal{P}_4$, $\mathcal{P}_5$, $\mathcal{P}_6$, $\mathcal{P}_7$, and $\mathcal{P}_8$.
`\end{proof}

\noindent {\bf Proof of Theorem~\ref{cor7}:}  Let $V_{2^-}$ and $V_3$ be the sets of vertices in distance at most $2$ ($v\in V_{2^-}$), and $3$ from $v$, respectively. Each edge in $\partial_G(V_{2^-} \cup V_3)$ is contained in an unique $4$-path starting at $v$. We call such paths \emph{leaving paths}.

Consider there exists a pair of distinct leaving paths $P_1$, $P_2$. If one can find a $2$-factor containing one of them, then this is the desired $2$-factor. Thus we may apply Theorem~\ref{thm3}.

If there is no $2$-factor containing $E(P_1) \cap E(P_2)$, then due to Theorem~\ref{plesnik} $|E(P_1) \cap E(P_2)|=3$.
But then $G-(E(P_1) \cap E(P_2))$ is bipartite. Let $v_2$ and $v_3$ be the neighbours of $v$ not contained in $P_1$. But then due to Theorem~\ref{plesnik} there is a $2$-factor containing $v_2vv_3$ and the circuit containing $v$ is either even or contains an edge from $E(P_1) \cap E(P_2)$ in which case its length is more than $7$. Thus $P_1$ and $P_2$ are either in position $\mathcal{P}_9$ or $\mathcal{P}_{12}$ (no other position allows two paths to have the same endpoint).

\smallskip
\noindent
\textbf{Claim 1.} 
For each pair $P_1$, $P_2$ of distinct leaving paths $|E(P_1)\cap E(P_2)| \in \{0, 2\}$. 
\smallskip

If there are at least $7$ distinct leaving paths, then at least $3$ of them share the first edge and thus they share also the second edge (Claim~1). Then at least $2$ of them share the third edge which contradicts Claim~1. Thus there are at most $6$ distinct leaving paths. If some two leaving paths are in position $\mathcal{P}_9$ the imbalance in the partition sizes among vertices in $V_3$ implies that there are at least $10$ distinct leaving paths, a contradiction.

\smallskip
\noindent
\textbf{Claim 2.} 
For each pair $P_1$, $P_2$ of distinct leaving paths $|E(P_1)\cap E(P_2)| = 0$. 
\smallskip

As girth of $G$ is at least $7$ and there are no Moore graphs of degree $3$ and girth $7$ \cite{moore} there are some leaving paths and as cubic graphs have even number of vertices and due to cyclic connectivity there are at least $4$ of them.
Thus at least two of them share the first edge which contradicts Claim~2.


\section*{Acknowledgements}
\noindent The first author acknowledges support from the
VEGA grant No. 1/0813/18, and from the projects APVV-15-0220 and APVV–19–0308. 
The work of the second author was partially supported by the
project GA17-04611S of the Czech Science Foundation and by the project
LO1506 of the Czech Ministry of Education, Youth and Sports.



\begin{thebibliography}{99}

\bibitem{aldred2} R.~E.~L.~Aldred, D.~A.~Holton, J.~Sheehan, \textit{$2$-Factors with Prescribed
and Proscribed Edges}, J. Graph Theory 49 (2005), 48--58.

\bibitem{aldred} R.~E.~L.~Aldred, B.~Jackson, \textit{Edge proximity conditions for extendability in cubic bipartite graphs}, J. Graph Theory 55 (2007).

\bibitem{berge} {C. Berge}, Th\'eorie des Graphes et Ses Applications, Collection Universitaire de Mathematiques, II Dunod, Paris 1958 viii+277 pp.

\bibitem{cheng1} E.~Cheng, L.~Lesniak, M.~J.~Lipman, L.~Lipták, \textit{Conditional matching preclusion sets}, Information Sciences 179 (2009), 1092--1101.

\bibitem{cheng2} E.~Cheng, P.~ Hu, R.~Jia, L.~Lipták, \textit{Matching preclusion and conditional matching preclusion for bipartite interconnection networks I: Sufficient conditions}, Networks 59 (2012), 349--356.

\bibitem{cheng3} E.~Cheng, M.~J.~Lipman, L.~Lipták: \textit{Matching preclusion and conditional matching preclusion for regular interconnection networks}, Discrete Applied Mathematics 160 (2012), 1936--1954.

\bibitem{cruse} A.~B.~Cruse:\textit{ A note on 1-factors in certain regular multigraphs}, Discrete Mathematics 18 (1977), 213--216.

\bibitem{diestel} R. Diestel, Graph theory, 3rd edition, Springer, 2005.

\bibitem{moore} G.~Exoo, R.~Jajcay, \textit{Dynamic Cage Survey}, Version 3, Electronic J Combin (2013), DS16.

\bibitem{holton} D.~A.~Holton, D.~Lou, M.~D.~Plummer, \textit{On the $2$-extendability of planar graphs}, Discrete Mathematics 96 (1991), 81--99.

\bibitem{jaeger} F.~Jaeger, T.~Swart,
\textit{Conjecture 1}, M.~Deza, I.~G.~Rosenberg (Eds.), Combinatorics 79, Annales of Discrete Mathematics 9 (1980), p. 305.

\bibitem{katernis} P.~Katrenis, \textit{Regular factors in regular graphs}, Discrete Math. 113 (1993), 269--274.

\bibitem{kochol} M.~Kochol, \textit{Reduction of the 5-flow conjecture to cyclically 6-edge-connected snarks}, J. Combin. Theory Ser. B 90 (2004), 139--145.

\bibitem{nedela} R.~Nedela, M.~\v Skoviera, \textit{Atoms of cyclic connectivity in cubic graphs}, Mathematica Slovaca 45 (1995), 481--499.

\bibitem{lin} R.~Lin, H.~Zhang: Maximally matched and super matched regular graphs, Int. J. Comput. Math.: CST 1 (2016), 74-84.

\bibitem{liu}  G.~Liu, H.~Feng, J~.Yu, \textit{$2$-factors with some properties in $2d$-regular graphs}, Proceedings of the Internatinal
Conference on Mathematical Programming, Shanghai University Press, 2004, 400--406.

\bibitem{lou} D.~Lou, D.~A.~Holton. \textit{Lower bound of cyclic edge connectivity for n-extendability of regular graphs}, Discrete Math 122 (1993), 139--150.

\bibitem{nebesky} G. Chartrand , L. Nebesk\'y, \textit{Note on $1$-factors in graphs},
Periodiea Mathematica Hungariea 10 (1979), 41--46.

\bibitem{plesnik}  J.~Plesn\' ik, \textit{Connectivity of regular graphs and the existence of 1-factors}, Mat. \v Casopis \textbf{22}
(1972), 310--318.

\bibitem{shiu} W.~C.~Shiu, G.~Z~Liu, \textit{$k$-factors in regular graphs}, Acta Math. Sinica, English Series 24 (2008), 1213--1220.

\bibitem{zaslavsky} {T. Zaslavsky}, \textit{Signed graphs}, Discrete Appl. Math \textbf{4} (1) (1982), 47-74.
\end{thebibliography}
\end{document}